\newcommand{\supp}{\operatorname{supp}}
\newcommand{\adj}{\operatorname{adj}}
\newcommand{\dv}{\operatorname{div}}
\def\bu{\mathbf{u}}
\def\bv{\mathbf{v}}
\def\bx{\mathbf{x}}
\def\bz{\mathbf{z}}
\def\by{\mathbf{y}}
\def\bE{\mathbf{E}}
\def\bX{\mathbf{X}}
\def\bY{\mathbf{Y}}
\def\S{\mathbf{S}}
\def\R{\mathbb{R}}
\def\bR{\mathbf{R}}
\def\Z{\mathbf{Z}}
\def\C{\mathbf{C}}
\def\Md{\mathbf{M}^{2\times2}}
\def\Mt{\mathbf{M}^{3\times3}}
\def\Mdt{\mathbf{M}^{2\times3}}
\def\MN{\mathbf{M}^{N\times N}}
\def\MdN{\mathbf{M}^{2\times2N}}
\def\MtN{\mathbf{M}^{2\times N}}
\def\M{\mathbb{M}^{m\times N}}
\def\F{\mathbf F}
\def\G{\mathbf G}
\def\Y{\mathbf Y}
\def\bM{\mathbf M}
\def\bK{\mathbf K}
\def\bB{\mathbf B}
\def\id{\mathbf{1}}
\def\supp{\hbox{supp}}
\def\adj{\hbox{adj}}
\def\det{\hbox{det}}
\newtheorem{theorem}{Theorem}[section]
\newtheorem{corollary}[theorem]{Corollary}
\newtheorem{lemma}[theorem]{Lemma} 
\newtheorem{proposition}[theorem]{Proposition}
\begin{document}

\title{Inverse quasiconvexification}
\author{Pablo Pedregal}
\date{} % delete this line to display the current date
\thanks{Supported by MICINN grant 
MTM2017-83740-P}
\begin{abstract}
In the context of the Calculus of Variations for non-convex, vector variational problems, the natural process of going from a function $\phi$ to its quasiconvexification $Q\phi$ is quite involved, and, most of the time, an impossible task. We propose to look at the reverse process, what might be called inverse quasiconvexification: start from a function $\phi_0$, and find functions $\phi$ for which $\phi_0=Q\phi$. In addition to establishing a few general principles, we show several explicit examples motivated by their application to inverse problems in conductivity. 
\end{abstract}
%% BEGIN DOCUMENT

\maketitle

\section{Introduction}
A paradigmatic problem in the Calculus of Variations is that of finding the quasiconvexification $Q\phi(\F)$ of a certain integrand 
$$
\phi(\F):\M\to\R.
$$
The relevance of such a process is very well-established because the vector variational problem consisting in minimizing the integral
$$
\int_\Omega\phi(\nabla \bu(\bx))\,d\bx
$$
among all Lipschitz mappings 
$$
\bu(\bx):\Omega\subset\R^N\to\R^m
$$ 
with prescribed Dirichlet boundary datum, admits a relaxation in the similar form
$$
\int_\Omega Q\phi(\nabla \bu(\bx))\,d\bx.
$$
This sentence precisely means (\cite{DacorognaH}, \cite{PedregalI}) that the infima for both problems, the one with integrand $\phi$ and the one with integrand $Q\phi$, are equal over that class of mappings $\bu$; the problem with integrand $Q\phi$ admits minimizers (under additional conditions and over more specific spaces of functions that we overlook here), even though the one with $\phi$ might not; and there is a close connection between minimizing sequences for the first, and minimizers for the second. The formal definition of the relaxed integrand $Q\phi$ is (\cite{dacor})
$$
Q\phi(\F)=\inf_{\bu\in W^{1, \infty}_0(D, \R^m)}\frac1{|D|}\int_D \phi(\F+\nabla\bu(\by))\,d\by
$$
for a (arbitrary) Lipschitz domain $D\subset\R^N$ (this definition does not depend on the domain $D$ used). The passage $\phi\mapsto Q\phi$ is well beyond general techniques for the true vector situation ($m, N\ge2$), and only a few explicit examples are known under varying sets of conditions (check \cite{DacorognaH}). 

We would like to address what might be called the inverse quasiconvexification problem: 
\begin{quote}
Given a certain quasiconvex function $\phi_0$, describe or find functions $\phi$ such that $Q\phi=\phi_0$.
\end{quote}
There is always one such $\phi$, namely $\phi\equiv\phi_0$. Some times this is the only possibility, for instance when $\phi_0$ is strictly quasiconvex. So we would like to focus on cases where this is not the situation. Therefore there are two main issues to be addressed:
\begin{enumerate}
\item describe the structure of quasiconvex integrands $\phi_0$ for which there are more $\phi$'s than just $\phi_0$ itself with $Q\phi=\phi_0$; and
\item once one such $\phi_0$ is given, describe, if at all possible, all such $\phi$'s, or at least a non-trivial subset of them. 
\end{enumerate}
One fundamental issue is, no doubt, the last point: to discover explicit, non-trivial, interesting examples of, at least partial, inverse quasiconvexifications. We deal below with some such examples coming from other applied fields in Analysis. 

If $\phi_0=Q\phi$ so that $\phi_0\le\phi$, the coincidence set $\Z=\{\phi=\phi_0\}$ plays a central role. Off $\Z$, $\phi_0<\phi$ and gradient Young measures $\nu$ such that 
$$
\langle\phi, \nu\rangle=\phi_0(\langle\id, \nu\rangle)
$$
need to have their support precisely contained in $\Z$ (see Appendix \ref{ultimo} for more comments in this direction).
A general answer to the issue of inverse quasiconvexification which makes clear the role played by the coincidence set $\Z$ is the following. In these abstract terms is too general to be of some practical value, but it will be our guiding principle. 
\begin{proposition}\label{general}
Let 
$$
\phi_0(\F):\M\to\R
$$ 
be a quasiconvex function, and let $\Z\subset\M$ be closed. Let $\G\Y_\Z$ designate the set of all gradient Young measures supported in $\Z$. 
Define the set
$$
\tilde\Z=\{\F\in\M: \hbox{ there is }\nu_\F\in\G\Y_\Z, \hbox{ with barycenter }\F,\hbox{ and } \langle\nu_\F, \phi_0\rangle=\phi_0(\F)\}.
$$
For every function 
$$
\phi(\F):\M\to\R\cup\{+\infty\}
$$ 
such that
$$
\phi=\phi_0\hbox{ in }\Z\cup(\M\setminus\tilde\Z),\quad \phi\ge\phi_0\hbox{ in }\tilde\Z\setminus\Z,
$$
we have $Q\phi=\phi_0$.
\end{proposition}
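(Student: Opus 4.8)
The plan is to prove the two inequalities $Q\phi\le\phi_0$ and $Q\phi\ge\phi_0$ separately, exploiting the characterization of $Q\phi$ in terms of gradient Young measures, namely $Q\phi(\F)=\inf\{\langle\nu,\phi\rangle:\nu\in\G\Y,\ \text{barycenter}(\nu)=\F\}$, which is equivalent to the $W^{1,\infty}_0$ formula given in the introduction.

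\emph{Step 1: $Q\phi\le\phi_0$.} Fix $\F\in\M$. If $\F\notin\tilde\Z$, then $\phi(\F)=\phi_0(\F)$, and since $Q\phi\le\phi$ always, and here I also need $Q\phi(\F)\le\phi_0(\F)$ — but more is true: I claim that for \emph{every} $\F$ we have $Q\phi(\F)\le\phi_0(\F)$. Indeed, if $\F\in\tilde\Z$, pick the homogeneous gradient Young measure $\nu_\F\in\G\Y_\Z$ with barycenter $\F$ and $\langle\nu_\F,\phi_0\rangle=\phi_0(\F)$. Since $\supp\nu_\F\subset\Z$ and $\phi=\phi_0$ on $\Z$, we get $\langle\nu_\F,\phi\rangle=\langle\nu_\F,\phi_0\rangle=\phi_0(\F)$, hence $Q\phi(\F)\le\langle\nu_\F,\phi\rangle=\phi_0(\F)$. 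If $\F\notin\tilde\Z$, then $\phi(\F)=\phi_0(\F)$ and the Dirac mass $\delta_\F$ (a trivial gradient Young measure) gives $Q\phi(\F)\le\phi(\F)=\phi_0(\F)$. So in all cases $Q\phi\le\phi_0$.

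\emph{Step 2: $Q\phi\ge\phi_0$.} This is where quasiconvexity of $\phi_0$ enters. Since $\phi\ge\phi_0$ everywhere (from $\phi=\phi_0$ on $\Z\cup(\M\setminus\tilde\Z)$ and $\phi\ge\phi_0$ on $\tilde\Z\setminus\Z$), and $Q\phi$ is the largest quasiconvex function below $\phi$, while $\phi_0$ is quasiconvex and $\phi_0\le\phi$, we conclude $\phi_0\le Q\phi$ by the very maximality property defining $Q\phi$. Alternatively, and more directly, for any $\nu\in\G\Y$ with barycenter $\F$ one has $\langle\nu,\phi\rangle\ge\langle\nu,\phi_0\rangle\ge\phi_0(\F)$ by Jensen's inequality for quasiconvex functions against gradient Young measures; taking the infimum over such $\nu$ gives $Q\phi(\F)\ge\phi_0(\F)$.

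Combining the two steps yields $Q\phi=\phi_0$. The only genuinely delicate point — really the heart of the definition of $\tilde\Z$ — is the construction used in Step 1 on $\tilde\Z\setminus\Z$: one must be sure that the measure $\nu_\F$ witnessing membership in $\tilde\Z$ is an honest gradient Young measure (so that it is admissible in the infimum defining $Q\phi$) and that its support really lies in $\Z$, so that replacing $\phi_0$ by $\phi$ inside the bracket changes nothing even though $\phi$ may be strictly larger than $\phi_0$ — indeed even $+\infty$ — off $\Z$. I would also remark that the possibility $\phi=+\infty$ somewhere causes no trouble: such points are necessarily inside $\tilde\Z\setminus\Z$ and the infimum defining $Q\phi$ at any $\F$ can always avoid charging them, precisely because of the $\nu_\F$ just constructed for nearby barycenters together with the fact that $Q\phi\le\phi_0<+\infty$. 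No laminate-by-laminate or explicit oscillating-sequence construction is needed; everything is packaged into the hypothesis defining $\tilde\Z$.
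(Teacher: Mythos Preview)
Your proof is correct and follows essentially the same route as the paper's: the inequality $\phi_0\le Q\phi$ comes from quasiconvexity of $\phi_0$ together with $\phi\ge\phi_0$, while the reverse inequality is obtained on $\tilde\Z$ by testing $Q\phi$ against the gradient Young measure $\nu_\F$ supported in $\Z$ (where $\phi=\phi_0$) and on $\M\setminus\tilde\Z$ by using $\phi=\phi_0$ directly. The paper compresses both inequalities into the single chain $\phi_0(\F)\le Q\phi(\F)\le\langle\phi,\nu_\F\rangle=\langle\phi_0,\nu_\F\rangle=\phi_0(\F)$, but the content is identical.
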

The proof, which is easy, can be found in Section \ref{segunda}. 
Note that 
$$
\Z\subset\tilde\Z\subset Q\Z,
$$ 
if $Q\Z$ is the quasiconvexification of the set $\Z$ (see Appendix \ref{ultimo}). If there is no possibility of finding one such set $\Z$ with $\tilde\Z\setminus\Z\neq\emptyset$, then $\phi_0$ can only be the quasiconvexification of itself. Typically the set $\Z$ is sought as the coincidence set 
$$
\Z=\{\phi=\phi_0\}
$$ 
of a candidate $\phi$ for which $Q\phi=\phi_0$. Proposition \ref{general} provides then many other integrands with the same quasiconvexification. Note that there might be various feasible sets $\Z$, in the statement of Proposition \ref{general}, for the same underlying $\phi_0$. 

The truth is that Proposition \ref{general} is hard to apply in practice, as there is no a priori way to know if a given $\phi_0$ will accept a non-trivial $\tilde\Z$, or how many of these one could possibly find. Yet we will work with some explicit examples, the most important of which is the jacobian. Its statement requires the following notation. For an index $j$, $1\le j\le N$, put 
$$
\bM_j=\{(\alpha, \F)\in\R\times\MN: \alpha\F^{(j)}=\adj^{(j)}\F\},
$$
where $\F^{(j)}$ is the $j$-th row or column of $\F$, and $\adj^{(j)}\F$ is the $j$-th row or column, respectively, of the adjugate matrix. 
\begin{theorem}\label{jacobiano}
Suppose that 
$$
\phi(\F):\MN\to\R\cup\{+\infty\}
$$ 
is such that
$$
\phi(\F)=|\det\F|\hbox{ in }\bM_j,\quad \phi(\F)\ge|\det\F|\hbox{ off }\bM_j.
$$
Then $Q\phi(\F)=|\det\F|$. In particular, for
$$
\phi(\F):\MN\to\R, \quad \phi(\F)=|\adj^{(j)}\F|\,|\F^{(j)}|,
$$
we have $Q\phi(\F)=|\det\F|$. 
\end{theorem}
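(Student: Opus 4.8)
The plan is to deduce Theorem~\ref{jacobiano} as an application of Proposition~\ref{general} with the choice $\phi_0(\F)=|\det\F|$ and $\Z=\bM_j$. First I would recall that $|\det\F|$ is quasiconvex (indeed polyconvex, being a convex function of the minors $\det\F$), so $\phi_0$ is admissible in Proposition~\ref{general}; and $\bM_j$ is clearly closed, being defined by the $N$ polynomial equations $\alpha\F^{(j)}=\adj^{(j)}\F$ once we read the first coordinate $\alpha$ as the determinant --- more precisely, on $\bM_j$ one has $\alpha=\det\F/$ (suitable cofactor) wherever $\F^{(j)}\neq\bcero$, and the cofactor expansion $\det\F=\F^{(j)}\cdot\adj^{(j)}\F$ forces $\det\F=\alpha|\F^{(j)}|^2$, which is where the factorization $|\adj^{(j)}\F|\,|\F^{(j)}|$ enters.

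The heart of the matter is to show that the set $\tilde\Z$ built from $\Z=\bM_j$ in Proposition~\ref{general} is \emph{all of} $\MN$; then the hypotheses $\phi=|\det\F|$ on $\bM_j$ and $\phi\ge|\det\F|$ off $\bM_j$ are exactly the hypotheses $\phi=\phi_0$ on $\Z\cup(\MN\setminus\tilde\Z)$ and $\phi\ge\phi_0$ on $\tilde\Z\setminus\Z$ demanded by the Proposition, and the conclusion $Q\phi=|\det\F|$ follows immediately. So the key step is: given an arbitrary $\F\in\MN$, produce a gradient Young measure $\nu_\F$ with barycenter $\F$, supported in $\bM_j$, and with $\langle\nu_\F,|\det|\rangle=|\det\F|$. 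The natural device is a laminate (a sequence of nested simple laminates / rank-one convexity construction): I would fix the index $j$ and split $\F$ along the $j$-th row by writing $\F^{(j)}$ as a convex combination of two vectors $\pm t\,\adj^{(j)}\F/|\adj^{(j)}\F|$-type directions (scaled so that the barycenter is right), keeping all other rows fixed, so that each matrix in the support has its $j$-th row parallel to $\adj^{(j)}\F$ (which is unchanged, since $\adj^{(j)}\F$ does not involve the $j$-th row), hence lands in $\bM_j$. One then checks the rank-one compatibility of the two matrices in the laminate --- they differ only in the $j$-th row, so their difference has rank one automatically --- and that the determinant, being affine along this rank-one direction, satisfies $\langle\nu_\F,\det\rangle=\det\F$; combined with $|\det|$ being bounded below by $|\langle\nu_\F,\det\rangle|$ on one side and the support condition forcing the right sign/value on the other, one gets $\langle\nu_\F,|\det|\rangle=|\det\F|$. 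A small subtlety when $\adj^{(j)}\F=\bcero$: then $\bM_j$ already contains $\F$ (with $\alpha$ free), or more precisely $\det\F=0$ and $\F\in\bM_j$ trivially, so $\F\in\tilde\Z$ with the Dirac mass $\delta_\F$.

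The last assertion then reduces to the pointwise inequality $|\adj^{(j)}\F|\,|\F^{(j)}|\ge|\F^{(j)}\cdot\adj^{(j)}\F|=|\det\F|$ (Cauchy--Schwarz, using cofactor expansion along the $j$-th row), which is the ``$\phi\ge|\det\F|$ off $\bM_j$'' hypothesis, together with the observation that equality in Cauchy--Schwarz holds exactly when $\F^{(j)}$ and $\adj^{(j)}\F$ are parallel, i.e.\ precisely on $\bM_j$ --- so this explicit $\phi$ meets both hypotheses and $Q\phi=|\det\F|$.

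I expect the main obstacle to be the construction and verification of the laminate $\nu_\F$ showing $\tilde\Z=\MN$: one must be careful that the whole support genuinely lies in $\bM_j$ (tracking that $\adj^{(j)}\F$ is invariant under changing the $j$-th row, and that the scalar $\alpha$ can be matched), that the construction is a legitimate gradient Young measure (laminates always are, via the standard Riesz / Hadamard rank-one argument, but the affine-ness of $\det$ along the chosen direction must be used to pin down $\langle\nu_\F,\det\rangle$), and that the degenerate cases $\F^{(j)}=\bcero$ or $\adj^{(j)}\F=\bcero$ are handled. Everything else --- quasiconvexity of $|\det|$, closedness of $\bM_j$, the Cauchy--Schwarz step --- is routine.
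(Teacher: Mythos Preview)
Your overall framework is correct and matches the paper's: reduce Theorem~\ref{jacobiano} to Proposition~\ref{general} with $\phi_0=|\det|$ and $\Z=\bM_j$, and show $\tilde\Z=\MN$. The Cauchy--Schwarz identification of the coincidence set for the explicit $\phi$ is also fine. But the laminate you describe does \emph{not} produce $\tilde\Z=\MN$, and this is a genuine gap, not a detail.

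You propose to modify only the $j$-th row of $\F$, exploiting that $\adj^{(j)}\F$ does not depend on $\F^{(j)}$. Precisely because $\adj^{(j)}$ is then \emph{fixed}, membership of the two endpoint matrices in $\bM_j$ forces each of their $j$-th rows to be a scalar multiple of that \emph{same} vector $\adj^{(j)}\F$. But the set of scalar multiples of a fixed nonzero vector is a one-dimensional subspace, closed under convex combinations; hence the barycenter $\F^{(j)}$ must already be parallel to $\adj^{(j)}\F$, i.e.\ $\F\in\bM_j$. So your simple laminate reaches only $\bM_j$ itself, not $\MN$. (Moreover, with your ``$\pm t$'' choice the two endpoints have determinants $\alpha_i|\F_i^{(j)}|^2$ of \emph{opposite} sign, so $\langle\nu_\F,|\det|\rangle>|\langle\nu_\F,\det\rangle|=|\det\F|$: the second condition in the definition of $\tilde\Z$ would fail even if the first held.)

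The paper's route is substantially more involved. It first splits $\bM_j=\bM_+\cup\bM_-$ according to the sign of $\alpha$ (equivalently of $\det$), so that on each piece $|\det|=\pm\det$ is genuinely quasiaffine and the equality $\langle\nu,|\det|\rangle=|\det\F|$ comes for free; this is the content of Proposition~\ref{curiosa} / Theorem~\ref{principal}. Then it shows $Q\bM_\pm=\{\pm\det\F\ge0\}$. For $N=2$ (Theorem~\ref{primeroo}) the rank-one pair $\F_0,\F_1\in\bM_+$ has \emph{both} rows varying (the matrices are $\bigl(\begin{smallmatrix}\bx_i\\\alpha_i\bR\bx_i\end{smallmatrix}\bigr)$), and the existence of suitable $t,\alpha_0,\alpha_1$ requires solving a quadratic and checking a discriminant condition~\eqref{ultimacondicion}. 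For $N\ge3$ (Theorem~\ref{adjunto}) one cannot even get by with a single simple laminate: the argument passes through the rank-one convexification of the zero set of an auxiliary polynomial $P_{t,\alpha_1,\alpha_0}(\F)$ via Lemma~\ref{primero}, and then sweeps over parameters to cover $\{\det\F>0\}$. In short, the step you flagged as ``the main obstacle'' is indeed the whole difficulty, and the construction you sketched is too rigid to overcome it; you must let rows other than the $j$-th move (so that $\adj^{(j)}$ itself varies), and you must control the sign of $\det$ on the support.
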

We will complete the proof of this result little by little, through successive versions of Proposition \ref{general}, and preliminary versions of Theorem \ref{jacobiano}. In addition, some extensions can be found in Section \ref{extensiones}. It is plausible that our results could be used in other explicit situations. Two final appendices have been included to cover some basic, known facts for the convenience of readers. In Appendix \ref{ultimo}, we have gathered statements and facts that are well-known to experts, and that are used without further comment throughout the paper. 

It is worthwhile to briefly describe the connection of some of these integrands to inverse problems in conductivity (\cite{astalapaivarinta}). This relationship will be much more deeply studied in a forthcoming contribution \cite{faustinopedregal}. For the sake of definiteness, let us consider the integrand
$$
\phi(\F):\Md\to\R,\quad \phi(\F)=|\F^{(1)}|\,|\F^{(2)}|, \F=\begin{pmatrix}\F^{(1)}\\\F^{(2)}\end{pmatrix},
$$
and its corresponding variational problem
\begin{equation}\label{ncvp}
\hbox{Minimize in }\bu:\quad \int_\Omega\phi(\nabla\bu(\bx))\,d\bx
\end{equation}
over a certain class of mappings $\bu$ having prescribed Dirichlet boundary data around $\partial\Omega$. This is a non-convex (and non-coercive), vector variational problem (\cite{PedregalI}). The Euler-Lagrange system for it is, at least formally,
\begin{equation}\label{elsistema}
\dv\left(\frac{|\nabla u_2(\bx)|}{|\nabla u_1(\bx)|}\nabla u_1(\bx)\right)=0,\quad
\dv\left(\frac{|\nabla u_1(\bx)|}{|\nabla u_2(\bx)|}\nabla u_2(\bx)\right)=0,
\end{equation}
if $\bu=(u_1, u_2)$. 
If we define the associated conductivity coefficient $\gamma(\bx)$ as
$$
\gamma(\bx)=\frac{|\nabla u_2(\bx)|}{|\nabla u_1(\bx)|},
$$
then 
$$
\dv(\gamma\nabla u_1)=0,\quad \dv(\frac1\gamma\nabla u_2)=0.
$$
These equations are exactly the ones for a couple of coherent measurements $(u_1, u_2)$ for the inverse conductivity problem. However, it is not clear under what circumstances problem \eqref{ncvp} would admit minimizers, in a way that it would be legitimate to ensure that there will be solutions for system \eqref{elsistema}. The relaxation of \eqref{ncvp} might play some role in understanding the situation. Note that 
this is a very particular case of Theorem \ref{jacobiano}. Its quasiconvexification is the jacobian function
$$
Q\phi(\F)=|\det\F|.
$$
There are many fundamental contributions on non-convex vector variational problems. The recent text \cite{rindler} is a very good place where most of the concepts and principal facts involved in varying frameworks are carefully and completely treated, and where those references can be found as well. 

\section{A basic principle}\label{segunda}
We start by proving Proposition \ref{general}. 
The inequality $\phi_0\le Q\phi$ is straightforward, given that $\phi_0$ is assumed to be quasiconvexity. Over the set 
$$
\Z\cup(\M\setminus\tilde\Z)
$$ 
there is nothing to show for in this set 
$$
\phi_0\le Q\phi\le\phi=\phi_0.
$$ 
Let $\F\in\tilde\Z\setminus\Z$. By definition of $\tilde\Z$ there is a certain gradient Young measure $\nu_\F$ with the claimed properties,  and we can put
$$
\phi_0(\F)\le Q\phi(\F)\le\langle\phi, \nu_\F\rangle=\langle\phi_0, \nu_\F\rangle=\phi_0(\F).
$$
Notice that we have used the fact that the quasiconvexification is the infimum over gradient Young measures, that $\nu_\F$ is supported in $\Z$, and that $\phi_0=\phi$ in $\Z$.

We will be trying to interpret the consequences of Proposition \ref{general}, and writing more transparent versions of it up to a point where specific examples can be found. 
A first statement in that direction follows. 
Recall that for a subset $\bK$ of matrices in $\M$, its quasiconvexification $Q\bK$ is the set of all possible first-moments of homogeneous gradient Young measures supported in $\bK$ (see Appendix \ref{ultimo}). Under no further restriction on the set $\bK$, various different definitions of its quasiconvex hull are possible (check for instance \cite{zhang}). But the one we adopt here is the best suited for our purposes. 

\begin{proposition}\label{curiosa}
Suppose we can write
$$
\Z=\cup_i\Z_i,\quad \M=\cup_i Q\Z_i,
$$
where the $\Z_i$'s are pairwise disjoint, and 
$$
\phi_0(\F):\M\to\R
$$ 
is quasiaffine over each $Q\Z_i$. For every 
$$
\phi(\F):\M\to\R
$$ 
such that 
$$
\phi=\phi_0\hbox{ in }\Z, \quad \phi\ge\phi_0\hbox{ off }\Z,
$$ 
we have
$$
Q\phi(\F)=\phi_0(\F).
$$
\end{proposition}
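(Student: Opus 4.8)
The plan is to deduce this statement directly from Proposition \ref{general}, by exhibiting a suitable closed set in the role of $\Z$ and checking that the decomposition hypotheses force $\tilde\Z=\M$. First I would take as the candidate set precisely the $\Z$ given in the statement, $\Z=\cup_i\Z_i$; since it is assumed to be a finite (or at worst locally finite) union with $\M=\cup_i Q\Z_i$, and each $Q\Z_i$ is closed, one checks that $\Z$ is closed, so Proposition \ref{general} applies to it. The hypothesis on $\phi$ is exactly $\phi=\phi_0$ on $\Z$ and $\phi\ge\phi_0$ off $\Z$, so in the language of Proposition \ref{general} it suffices to show $\tilde\Z=\M$; then the conditions ``$\phi=\phi_0$ on $\Z\cup(\M\setminus\tilde\Z)=\Z$'' and ``$\phi\ge\phi_0$ on $\tilde\Z\setminus\Z=\M\setminus\Z$'' coincide with the stated hypotheses, and the conclusion $Q\phi=\phi_0$ follows.

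The heart of the argument is therefore: \emph{every} $\F\in\M$ belongs to $\tilde\Z$. Fix $\F$. By the covering hypothesis $\M=\cup_i Q\Z_i$, there is an index $i$ with $\F\in Q\Z_i$. By the characterization of $Q\Z_i$ recalled in the excerpt, there is a homogeneous gradient Young measure $\nu_\F$ supported in $\Z_i\subset\Z$ with barycenter $\langle\id,\nu_\F\rangle=\F$; in particular $\nu_\F\in\G\Y_\Z$. It remains to verify the energy condition $\langle\phi_0,\nu_\F\rangle=\phi_0(\F)$. This is where quasiaffineness of $\phi_0$ on $Q\Z_i$ is used: a quasiaffine function restricted to a set commutes with taking barycenters of gradient Young measures supported in that set, i.e. $\langle\phi_0,\nu_\F\rangle=\phi_0(\langle\id,\nu_\F\rangle)=\phi_0(\F)$, because both $\supp\nu_\F\subset\Z_i\subset Q\Z_i$ and $\F=\langle\id,\nu_\F\rangle\in Q\Z_i$ lie in the region where $\phi_0$ is quasiaffine. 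Hence $\F\in\tilde\Z$, and since $\F$ was arbitrary, $\tilde\Z=\M$.

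The step I expect to require the most care is the last one: making precise the sense in which ``$\phi_0$ quasiaffine on $Q\Z_i$'' yields $\langle\phi_0,\nu_\F\rangle=\phi_0(\F)$. One must be sure that the relevant Jensen-type equality holds for a \emph{localized} quasiaffine function — that is, that being quasiaffine only on the convex-like region $Q\Z_i$ (rather than globally) still suffices, given that the support of $\nu_\F$ and its barycenter are all contained in $Q\Z_i$. A clean way to handle this is to note that $Q\Z_i$ is itself quasiconvex, so one may replace $\phi_0$ off $Q\Z_i$ by $+\infty$ without affecting the relevant averages, and invoke the standard fact (recorded in Appendix \ref{ultimo}) that quasiaffine functions satisfy Jensen's equality against gradient Young measures. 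The disjointness of the $\Z_i$'s, incidentally, is not needed for the inclusion $\tilde\Z\supset\M$; it only serves to make the bookkeeping of the decomposition unambiguous and to ensure $\phi_0$ is unambiguously defined by its pieces, so I would mention it but not lean on it.
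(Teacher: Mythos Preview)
Your argument is correct and uses the same core idea as the paper: for $\F\in Q\Z_i$, pick a homogeneous gradient Young measure $\nu$ supported in $\Z_i$ with barycenter $\F$, and invoke quasiaffineness of $\phi_0$ on $Q\Z_i$ to obtain $\langle\phi_0,\nu\rangle=\phi_0(\F)$. The paper carries this out directly in one chain,
\[
\phi_0(\F)\le Q\phi(\F)\le\langle\phi,\nu\rangle=\langle\phi_0,\nu\rangle=\phi_0(\F),
\]
rather than routing through Proposition~\ref{general}. Your detour through that proposition obliges you to check that $\Z$ is closed, and your justification for this (that closedness of the $Q\Z_i$ and the covering $\M=\cup_i Q\Z_i$ force $\Z=\cup_i\Z_i$ closed) is a non sequitur; fortunately the closedness hypothesis plays no role in the proof of Proposition~\ref{general} itself, so the wrinkle is cosmetic. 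Your care about what ``quasiaffine on $Q\Z_i$'' buys for Jensen's equality is well placed and, if anything, more scrupulous than the paper, which uses that step without comment.
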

\begin{proof}
The proof is immediate just as the one of Proposition \ref{general}. If $\F\in Q\Z_i$, then there is at least one gradient Young measure $\nu$ such that
$$
\langle \id, \nu\rangle=\F, \quad \supp(\nu)\subset\Z_i.
$$
Then
$$
\phi_0(\F)\le Q\phi(\F)\le\langle \phi, \nu\rangle=\langle \phi_0, \nu\rangle=\phi_0(\F).
$$
The second inequality above holds because $\nu$ is a gradient Young measure; the first equality is correct because $\supp(\nu)\subset\Z$ where $\phi=\phi_0$; and the last one is due to the fact that $\phi_0$ is quasiaffine over $Q\Z_i$. 
\end{proof}
This situation can be applied to cases where $\phi_0$ is the supremum of quasiaffine functions
$$
\phi_0=\sup\{\phi_i\}
$$
and each $\phi_i$ is quasiaffine. $\phi_0$ is then quasiconvex (even polyconvex), and each set $\{\phi_0=\phi_i\}$ is quasiconvex by definition. If we aim at applying the preceding proposition in a non-trivial way, we need to find proper subsets $\Z_i$ of $\{\phi_0=\phi_i\}$ such that
\begin{equation}\label{cuasi}
Q\Z_i=\{\phi_0=\phi_i\}.
\end{equation}
This is again the inverse process to finding quasiconvexification of sets: instead of passing from $\Z_i$ to $Q\Z_i$, we would like to reverse the process and go from a known set $\tilde\Z_i(=\{\phi_0=\phi_i\})$ to a set $\Z_i$ such that $Q\Z_i=\tilde\Z_i$. The smaller the set $\Z_i$ is, the larger the set of functions $\phi$ whose quasiconvexification is $\phi_0$ will be. This is related to the difficult problem of finding the quasiconvex extreme points of a given set $\tilde\Z_i$ (\cite{zhang}, and also \cite{kruzik}). We do not pretend to get that far in this contribution, but will be contented with finding some explicit non-trivial situations. 

In practice, sets $\Z_i$ under condition \eqref{cuasi} are found in a direct way, by starting with a specific function $\phi$, in addition to $\phi_0$, the candidate to quasiconvexification, such that $\phi\ge\phi_0$ and writing 
$$
\Z=\cup_i\Z_i, \quad \Z_i=\{\phi=\phi_i\},\quad \phi_0=\sup_i\phi_i.
$$
The main part of the job is to show precisely that
$$
Q\{\phi=\phi_i\}=\{\phi_0=\phi_i\}.
$$

\section{One explicit example}\label{sec:4}
Consider the jacobian function
$$
\phi_0(\F):\Md\to\R,\quad \phi_0(\F)=|\det\F|.
$$
We would like to find one explicit family of functions $\phi$ such that $Q\phi=\phi_0$.
According to Proposition \ref{curiosa}, and bearing in mind that $\phi_0$ is quasiaffine over the sets of $2\times2$-matrices with a determinant of constant sign, we would need to find sets of matrices $\Z_+$, $\Z_-$ such that
\begin{equation}\label{ceroset}
Q\Z_\pm=\{\F\in\Md: \det\F\ge(\le)0\}.
\end{equation}
Recall that
$$
\det\F=-\F^{(1)}\cdot \bR\F^{(2)},
$$
if
$$
\bR=\begin{pmatrix}0&-1\\1&0\end{pmatrix}
$$
is the counterclockwise $\pi/2$-rotation in the plane. 
\begin{theorem}\label{primeroo}
Let 
$$
\phi(\F):\Md\to\R\cup\{+\infty\}
$$ 
be a function (no regularity assumed) such that
\begin{enumerate}
\item Coincidence set:
$$
\phi(\F)=\phi_0(\F),\quad \F\in\Z=\left\{\begin{pmatrix}\bx\\\alpha \bR\bx\end{pmatrix}: \alpha\in\R, \bx\in\R^2\right\};
$$
\item Off this coincidence set, we have
$$
\phi(\F)\ge\phi_0(\F),\quad \F\notin\Z.
$$
\end{enumerate}
Then $Q\phi(\F)=\phi_0(\F)$. Said differently, for every function $\phi$ such that
$$
\phi(\F)=|\det\F|,\quad \F\in\Z,
$$
we have
$$
Q(\max\{\phi(\F), |\det\F|\})=|\det\F|.
$$ 
\end{theorem}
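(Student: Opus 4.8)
The plan is to run the argument behind Proposition~\ref{curiosa}. Since $\phi_0(\F)=|\det\F|$ is polyconvex, hence quasiconvex, monotonicity of $Q$ gives $Q\phi\ge Q\phi_0=\phi_0$, so the whole point is to show $Q\phi\le\phi_0$. Split the coincidence set as $\Z=\Z_+\cup\Z_-$ with $\Z_\pm=\{\begin{pmatrix}\bx\\\alpha\bR\bx\end{pmatrix}:\pm\alpha\ge0,\ \bx\in\R^2\}$; because $\det\begin{pmatrix}\bx\\\alpha\bR\bx\end{pmatrix}=\alpha|\bx|^2$, the determinant keeps a fixed sign on each $\Z_\pm$, and there $\phi_0$ agrees with the quasiaffine function $\pm\det\F$. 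I would then verify $Q\phi(\F)\le\phi_0(\F)$ according to the sign of $\det\F$, using the characterization of $Q\phi$ as the infimum of $\langle\phi,\nu\rangle$ over homogeneous gradient Young measures $\nu$ with barycenter $\F$.

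If $\F\in\Z$ (in particular if $\det\F=0$ with $\F^{(2)}=\mathbf 0$, or $\F=\mathbf 0$) there is nothing to do. So let $\F=\begin{pmatrix}\ba\\\bb\end{pmatrix}\notin\Z$ with $\det\F\ne0$; then $\ba,\bb$ are independent and $\ba\cdot\bb\ne0$. The heart of the proof is to exhibit $\F$ as a first-order laminate supported in $\Z$. With the notation $[\bu,\bv]=\det\begin{pmatrix}\bu\\\bv\end{pmatrix}=-\bu\cdot\bR\bv$, I would pick $\mathbf n\in\R^2$ so that the pair $([\ba,\mathbf n],[\bb,\mathbf n])$ has both entries nonzero and $[\ba,\mathbf n]\,[\bb,\mathbf n]$ of sign opposite to $\ba\cdot\bb$ — possible because $\mathbf n\mapsto([\ba,\mathbf n],[\bb,\mathbf n])$ is a linear isomorphism of $\R^2$, its determinant being $\det\F\ne0$ — and set $\ba'=([\ba,\mathbf n],[\bb,\mathbf n])$, $\F_t=\F+t\,\ba'\otimes\mathbf n$. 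Expanding the $2\times2$ determinant bilinearly, the choice of $\ba'$ forces $\det\F_t\equiv\det\F$; and the orthogonality of the two rows $\ba+t[\ba,\mathbf n]\mathbf n$, $\bb+t[\bb,\mathbf n]\mathbf n$ of $\F_t$ amounts to a nondegenerate quadratic in $t$ whose roots have product $\frac{\ba\cdot\bb}{[\ba,\mathbf n][\bb,\mathbf n]\,|\mathbf n|^2}<0$, hence are real with $t_-<0<t_+$. Neither row of $\F_{t_\pm}$ vanishes (that would force $\ba$ or $\bb$ parallel to $\mathbf n$), so $\F_{t_\pm}$ are matrices with orthogonal nonzero rows, i.e.\ $\F_{t_\pm}\in\Z$, and carrying determinant $\det\F\ne0$ they lie in $\Z_+$ or $\Z_-$ according to the sign of $\det\F$. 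As $\F_{t_-}-\F_{t_+}$ is rank one and $\F$ lies on the open segment between $\F_{t_-}$ and $\F_{t_+}$, the laminate $\nu=\lambda\delta_{\F_{t_-}}+(1-\lambda)\delta_{\F_{t_+}}$, $\lambda=t_+/(t_+-t_-)$, is a homogeneous gradient Young measure with barycenter $\F$ and support in $\Z$; hence $Q\phi(\F)\le\langle\phi,\nu\rangle=\langle\phi_0,\nu\rangle=|\det\F|$, using $\phi=\phi_0$ on $\Z$.

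It remains to treat the matrices with $\det\F=0$ not in $\Z$, namely the rank-one matrices $\F=\mathbf p\otimes\mathbf q$ with $\F^{(2)}\ne\mathbf 0$ (equivalently $p_2\ne0$); these lie in neither $Q\Z_+$ nor $Q\Z_-$, so they must be reached by a limiting argument. For $\varepsilon>0$ put $\F^\pm_\varepsilon=\F\pm\varepsilon\,\be_1\otimes(\bR\mathbf q)$, a rank-one perturbation of the first row only; then $\F$ is the midpoint of the rank-one segment $[\F^-_\varepsilon,\F^+_\varepsilon]$ and $\det\F^\pm_\varepsilon=\mp\varepsilon\,p_2|\mathbf q|^2\ne0$, so the previous step gives $Q\phi(\F^\pm_\varepsilon)=\phi_0(\F^\pm_\varepsilon)=\varepsilon|p_2|\,|\mathbf q|^2$. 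Since $Q\phi$ is quasiconvex, hence rank-one convex, $Q\phi(\F)\le\tfrac12 Q\phi(\F^+_\varepsilon)+\tfrac12 Q\phi(\F^-_\varepsilon)=\varepsilon|p_2|\,|\mathbf q|^2\to0$, so $Q\phi(\F)\le0=\phi_0(\F)$. This establishes $Q\phi=\phi_0$. The reformulation is then immediate: if $\phi=|\det\F|$ on $\Z$, then $\psi:=\max\{\phi,|\det\F|\}$ is $|\det\F|$ on $\Z$ and $\ge|\det\F|$ off $\Z$, so $Q\psi=|\det\F|$.

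The main obstacle is the construction in the second paragraph: producing, for each $\F\notin\Z$ with $\det\F\ne0$, a single rank-one line through $\F$ on which the determinant stays constant and which meets $\Z$ on both sides of $\F$. The sign bookkeeping — pitting $\ba\cdot\bb$ against $[\ba,\mathbf n][\bb,\mathbf n]$ — is exactly what makes the two intersection points straddle $\F$, and it is the one place where a real computation is unavoidable. A smaller but easily missed point is that Proposition~\ref{curiosa} does not cover the rank-one matrices of zero determinant lying off $\Z$ (they belong to no $Q\Z_i$), so the limiting argument of the third paragraph is genuinely needed.
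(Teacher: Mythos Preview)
Your proof is correct and follows the paper's overall plan --- split $\Z=\Z_+\cup\Z_-$ and exhibit each matrix with $\det\F>0$ (resp.\ $<0$) as a first-order laminate of two points of $\Z_+$ (resp.\ $\Z_-$) --- but the way you build the laminate is genuinely different and tidier. The paper parametrizes the two endpoints by scalars $\alpha_0,\alpha_1$, solves for the accompanying vectors $\bx_0,\bx_1$, obtains a quadratic in $t$, and is then left to verify that a discriminant-type inequality (their \eqref{ultimacondicion}) can be met for some positive $\alpha_0,\alpha_1$, which it does by a limiting argument along the hyperbola $\alpha_0\alpha_1=1$. You instead choose the rank-one direction $\mathbf n$ so that $\det\F_t$ is \emph{constant} along the line $t\mapsto\F+t\,\ba'\otimes\mathbf n$; the remaining condition (row orthogonality) is then a single quadratic in $t$, and the product-of-roots sign --- forced by picking $[\ba,\mathbf n][\bb,\mathbf n]$ opposite in sign to $\ba\cdot\bb$ --- guarantees real roots straddling $t=0$. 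This avoids the two-parameter search entirely. A further difference: you explicitly handle the rank-one matrices with $\det\F=0$ lying off $\Z$ by a limiting argument plus the rank-one convexity of $Q\phi$, whereas the paper treats only $\det\F>0$ and $\det\F<0$ and leaves this boundary case implicit; your last paragraph therefore fills a small gap in the paper's argument.
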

Before proving this result, it is interesting to focus on the following particular example, which is a straightforward corollary of the previous theorem.

\begin{corollary}\label{ejemplo}
If 
$$
\phi(\F)=|\F^{(1)}|\,|\F^{(2)}|,\quad \F=\begin{pmatrix}\F^{(1)}\\
\F^{(2)}\end{pmatrix}\in\Md
$$
then
$$
Q\phi(\F)=|\det\F|.
$$
\end{corollary}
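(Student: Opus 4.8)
The plan is to deduce this directly from Theorem \ref{primeroo} by checking its two hypotheses for the specific choice $\phi(\F)=|\F^{(1)}|\,|\F^{(2)}|$. First I would verify the pointwise inequality $\phi\ge\phi_0$ everywhere: using the identity $\det\F=-\F^{(1)}\cdot\bR\F^{(2)}$ recalled just before the theorem, together with the fact that $\bR$ is an isometry (so $|\bR\F^{(2)}|=|\F^{(2)}|$), the Cauchy--Schwarz inequality gives
$$
|\det\F|=|\F^{(1)}\cdot\bR\F^{(2)}|\le|\F^{(1)}|\,|\bR\F^{(2)}|=|\F^{(1)}|\,|\F^{(2)}|=\phi(\F),
$$
which is exactly hypothesis (2).

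Next I would identify the coincidence set. Equality in Cauchy--Schwarz above holds precisely when $\F^{(1)}$ and $\bR\F^{(2)}$ are parallel, i.e. $\F^{(1)}=\lambda\,\bR\F^{(2)}$ for some scalar $\lambda$ (including the degenerate cases where one row vanishes). Writing $\bx=\F^{(1)}$ and using $\bR^{-1}=-\bR=\bR^T$, this is equivalent to $\F^{(2)}=\mu\bR\bx$ for a suitable scalar $\mu$; that is, $\F$ lies in the set
$$
\Z=\left\{\begin{pmatrix}\bx\\\alpha\bR\bx\end{pmatrix}:\alpha\in\R,\ \bx\in\R^2\right\}.
$$
So $\phi(\F)=|\det\F|=\phi_0(\F)$ exactly on $\Z$, which gives hypothesis (1) (and in fact shows $\phi>\phi_0$ strictly off $\Z$). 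Since both hypotheses of Theorem \ref{primeroo} are met, we conclude $Q\phi(\F)=|\det\F|$.

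There is essentially no obstacle here: the only mild point is the careful bookkeeping of the degenerate cases (a vanishing row, where $\det\F=0$ and the parametrization of $\Z$ still applies with $\bx=\bcero$ or $\alpha$ arbitrary), but these are covered by the same parametrization, so the argument is complete once Theorem \ref{primeroo} is in hand.
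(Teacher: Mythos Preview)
Your proposal is correct and follows exactly the route the paper intends: the paper states this as ``a straightforward corollary of the previous theorem'' and remarks that the set $\Z$ in Theorem~\ref{primeroo} is precisely the coincidence set $\{\phi=\phi_0\}$ for this $\phi$; you supply the details via Cauchy--Schwarz and the identity $\det\F=-\F^{(1)}\cdot\bR\F^{(2)}$. One tiny imprecision: when $\F^{(1)}=0$ and $\F^{(2)}\neq 0$ you still have $\phi(\F)=0=\phi_0(\F)$, but such matrices are not captured by the parametrization of $\Z$ (since $\bx=0$ forces $\alpha\bR\bx=0$); however this is harmless, because Theorem~\ref{primeroo} only requires $\phi=\phi_0$ on $\Z$ and $\phi\ge\phi_0$ off $\Z$, both of which you have verified.
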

As readers may realize, our set $\Z$ in the statement of Theorem \ref{primeroo} is precisely given by the coincidence set
$\{\phi=\phi_0\}$ for this particular $\phi$ in Corollary \ref{ejemplo}. 

Though not of particular relevance for our purposes here, it is an interesting issue to know whether all matrices, or which among them, of the sets $\Z_\pm$ are quasiconvex extreme points (\cite{zhang}). Given that matrices in $\Z_+$ are $2$-quasiconformal matrices (\cite{astalafaraco}), there are special properties for gradient Young measures supported in $\Z_+$ (see also \cite{faraco}). 

\begin{proof}
According to Proposition \ref{curiosa}, all we need to check is that
$$
Q\Z_{\pm}=\{\F\in\Md: \det\F\ge(\le)0\}
$$
if
$$
\Z_\pm=\Z=\left\{\begin{pmatrix}\bx\\\alpha\bR\bx\end{pmatrix}: \alpha>(<)0, \bx\in\R^2\right\}.
$$

Note that the two matrices 
$$
\begin{pmatrix}\bx\\\alpha\bR\bx\end{pmatrix},\quad \begin{pmatrix}\by\\\beta\bR\by\end{pmatrix}
$$
are rank-one connected if
$$
(\bx-\by)\cdot(\alpha\bx-\beta\by)=0.
$$

Suppose  first that $\F$ has positive determinant. 
We will concentrate on showing that  two matrices $\F_0, \F_1\in \Z_+$, and a parameter $t\in[0, 1]$ can be found, such that
$$
\F=t\F_1+(1-t)\F_0,\quad \F_1-\F_0, \hbox{ rank-one}.
$$
The computations that follow are based on similar calculations, for instance in \cite{pedregalSS}, in a slightly different framework. 

We already know that  
$$
\F_i=\begin{pmatrix}\bx_i\\\alpha_i\bR\bx_i\end{pmatrix},\quad i=1, 0,
$$
for some positive $\alpha_i$ and vectors $\bx_i$. The condition on the difference $\F_1-\F_0$ being a rank-one matrix translates, as already remarked, into
\begin{equation}\label{productonulo}
(\bx_1-\bx_0)\cdot(\alpha_1\bx_1-\alpha_0\bx_0)=0;
\end{equation}
finally, we should have
$$
\F^{(1)}=t\bx_1+(1-t)\bx_0,\quad \F^{(2)}=t\alpha_1\bR\bx_1+(1-t)\alpha_0\bR\bx_0.
$$
From these two vector equations, one can easily find that
\begin{gather}
\bx_1=\frac1t\frac1{\alpha_0-\alpha_1}(\alpha_0\F^{(1)}+\bR\F^{(2)}),\nonumber\\
\bx_0=-\frac1{1-t}\frac1{\alpha_0-\alpha_1}(\alpha_1\F^{(1)}+\bR\F^{(2)}).\nonumber
\end{gather}
If we replace these expressions in \eqref{productonulo}, and rearrange terms, we arrive at the quadratic equation in $t$
\begin{align}
\det\,\F \,t^2-&\frac1{\alpha_0-\alpha_1}(\alpha_1\alpha_0|\F^{(1)}|^2-|\F^{(2)}|^2+(\alpha_0-\alpha_1)\det\,\F)\, t\nonumber\\
+&\frac1{(\alpha_0-\alpha_1)^2}(\alpha_0^2\alpha_1|\F^{(1)}|^2+\alpha_1|\F^{(2)}|^2-2\alpha_0\alpha_1\det\,\F)=0.\label{cuadratico}
\end{align}
The value of this quadratic function for $t=0$ and $t=1$ turns out to be, respectively,
$$
\frac{\alpha_1}{(\alpha_0-\alpha_1)^2}|\alpha_0\F^{(1)}+\bR\F^{(2)}|^2,\quad
\frac{\alpha_0}{(\alpha_0-\alpha_1)^2}|\alpha_1\F^{(1)}+\bR\F^{(2)}|^2.
$$
Under the condition $\det\,\F>0$, there are roots for $t$ in $(0, 1)$, provided that the discriminant is non-negative, and the vertex of the parabola belongs to $(0, 1)$. It is elementary, again after some algebraic manipulations, that these conditions amount to having
\begin{equation}\label{ultimacondicion}
2\sqrt{\alpha_1\alpha_0}\sqrt{|\F^{(1)}|^2|\F^{(2)}|^2-\det\,\F\,^2}\le
(\alpha_1+\alpha_0)\det\,\F-\alpha_1\alpha_0|\F^{(1)}|^2-|\F^{(2)}|^2,
\end{equation}
for some positive values $\alpha_i$, $i=1, 0$. If we examine the function of two variables
$$
f(\alpha_1, \alpha_0)=\frac1{\sqrt{\alpha_1\alpha_0}}[(\alpha_1+\alpha_0)\det\,\F-\alpha_1\alpha_0|\F^{(1)}|^2-|\F^{(2)}|^2],
$$
we realize that along the hyperbole $\alpha_1\alpha_0=1$, $f$ grows indefinitely (recall that $\det\,\F>0$), and eventually it becomes larger than any positive value, in particular, bigger than
$$
2\sqrt{|\F^{(1)}|^2|\F^{(2)}|^2-\det\,\F\,^2}.
$$
In this way \eqref{ultimacondicion} is fulfilled for some positive values for $\alpha_1$ and $\alpha_0$, and the proof of this step is finished. 

If $\det\,\F<0$, it is readily checked that the same above calculations lead to the result $Q\phi(\F)=-\det\,\F$ because there is a minus sign in front of every occurrence of the determinant, with negative values for $\alpha_1$ and $\alpha_0$. 
\end{proof}
Once these computations have been checked out, one realizes that the general $N$-th version of the result in this corollary (including that of Corollary \ref{tres} below) can be shown, and generalized, by taking into account the Hadamard inequality
\begin{equation}\label{deter}
|\det\F|\le\Pi_i|\F^{(i)}|,\quad \F=\begin{pmatrix}\F^{(i)}\end{pmatrix},
\end{equation}
and equality holds (the coincidence set) precisely when the rows (or columns) $\F^{(i)}$ are orthogonal . The rank-one convex envelope of the right-hand side in \eqref{deter} yields back the jacobian on the left. 

\section{A general principle}
We would like to push the ideas of our basic principle Proposition \ref{curiosa} to build some other examples. In particular, for a quasiconvex function 
$$
\phi_0(\F):\M\to\R\cup\{+\infty\},
$$
of the form
\begin{equation}\label{inverso}
\phi_0(\F)=\max\{\psi(\F), \max\{\phi_\lambda(\F): \lambda\in\Lambda\}\},
\end{equation}
where $\psi$ is quasiconvex and each $\phi_\lambda$ is quasiaffine, we would like to describe all possible functions $\phi$ such that $Q\phi=\phi_0$. We explicitly separate the function $\psi$ because it will play a different role compared to the quasiaffine terms $\phi_\lambda$. In order to avoid undesirable situations, we make explicit assumptions that could, otherwise, be taken tacitly for granted, namely, 
\begin{enumerate}
\item the sets $\{\phi_0=\phi_\lambda\}$ are non-empty; 
\item the set $\M\setminus\{\phi_0=\psi\}$ is bounded; and
\item the function $\psi$ is strictly quasiconvex.
\end{enumerate}

\begin{theorem}\label{principal}
Under the assumptions just indicated, a function
$$
\phi(\F):\M\to\R\cup\{+\infty\}
$$
is such that $Q\phi=\phi_0$ given in \eqref{inverso} if and only if there are sets 
$$
 \bM_\lambda\subset\{\phi_0=\phi_\lambda\},
$$
with
\begin{equation}\label{clave}
 Q\bM_\lambda=\{\phi_0=\phi_\lambda\}
\end{equation}
for all $\lambda\in\Lambda$, and if
$$
\bM_0=\{\phi_0=\psi\},
$$
then we have
\begin{gather}
\phi=\phi_0\hbox{ on }\bM_0\cup\left(\cup_\lambda\bM_\lambda\right),\nonumber\\
\phi\ge\phi_0\hbox{ off }\bM_0\cup\left(\cup_\lambda\bM_\lambda\right).\nonumber
\end{gather}
\end{theorem}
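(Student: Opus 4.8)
The plan is to split the argument into the ``if'' and ``only if'' directions, and to reduce the ``if'' direction to Proposition~\ref{curiosa} while handling the quasiconvex term $\psi$ separately. For the ``if'' direction, suppose we are given sets $\bM_\lambda\subset\{\phi_0=\phi_\lambda\}$ with $Q\bM_\lambda=\{\phi_0=\phi_\lambda\}$, and a function $\phi$ that agrees with $\phi_0$ on $\bM_0\cup(\cup_\lambda\bM_\lambda)$ and dominates $\phi_0$ elsewhere. Since $\phi_0$ is quasiconvex and $\phi\ge\phi_0$, we immediately get $\phi_0\le Q\phi$. For the reverse inequality, fix $\F$. If $\F\in\{\phi_0=\phi_\lambda\}=Q\bM_\lambda$ for some $\lambda$, pick a homogeneous gradient Young measure $\nu$ with barycenter $\F$ and $\supp\nu\subset\bM_\lambda$; then, exactly as in the proof of Proposition~\ref{curiosa}, $Q\phi(\F)\le\langle\phi,\nu\rangle=\langle\phi_0,\nu\rangle=\langle\phi_\lambda,\nu\rangle=\phi_\lambda(\F)=\phi_0(\F)$, using that $\phi=\phi_0=\phi_\lambda$ on $\bM_\lambda$ and that $\phi_\lambda$ is quasiaffine. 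If instead $\F\notin\{\phi_0=\phi_\lambda\}$ for every $\lambda$, then by \eqref{inverso} we must have $\phi_0(\F)=\psi(\F)$, i.e. $\F\in\bM_0$, where $\phi(\F)=\phi_0(\F)$, so $Q\phi(\F)\le\phi(\F)=\phi_0(\F)$ trivially. In all cases $Q\phi(\F)=\phi_0(\F)$.

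For the ``only if'' direction, suppose $Q\phi=\phi_0$; we must produce the sets $\bM_\lambda$ and verify the two conditions on $\phi$. The natural choice is to take $\bM_\lambda$ to be a minimal subset of $\{\phi_0=\phi_\lambda\}$ on which $\phi=\phi_0$ and whose quasiconvex hull recovers the whole face $\{\phi_0=\phi_\lambda\}$; but since the theorem only asserts existence of \emph{some} such sets, it suffices to show first that $\phi=\phi_0$ on $\bM_0$ and on each face $\{\phi_0=\phi_\lambda\}$, and then simply take $\bM_\lambda=\{\phi_0=\phi_\lambda\}$, which trivially satisfies $Q\bM_\lambda=\{\phi_0=\phi_\lambda\}$ because each face is quasiconvex (being the coincidence set of the quasiconvex $\phi_0$ with the quasiaffine $\phi_\lambda$, hence quasiconvex). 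The content is therefore the pointwise claim: if $Q\phi=\phi_0$ then $\phi=\phi_0$ on $\bM_0\cup(\cup_\lambda\bM_\lambda)$ and $\phi\ge\phi_0$ off it. The inequality $\phi\ge Q\phi=\phi_0$ everywhere is immediate, so we only need $\phi\le\phi_0$ on the coincidence faces.

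This last point is where the three standing assumptions enter, and it is the main obstacle. On a point $\F$ where $\phi_0$ coincides with a quasiaffine $\phi_\lambda$, one argues that $\F$ is a quasiconvex extreme point of the sublevel structure: because $\psi$ is \emph{strictly} quasiconvex, any gradient Young measure $\nu$ with barycenter $\F$ satisfying $\langle\phi_0,\nu\rangle=\phi_0(\F)$ must be supported where $\phi_0=\phi_\lambda$ (the strict term $\psi$ forbids mass escaping into the region $\M\setminus\{\phi_0=\psi\}$, which is moreover bounded so no mass escapes to infinity either). One then shows that such an $\F$ cannot be written nontrivially via a gradient Young measure charging points where $\phi>\phi_0$ without violating $Q\phi(\F)=\phi_0(\F)$; equivalently, the only way $Q\phi(\F)$ can equal $\phi_0(\F)=\phi_\lambda(\F)$ is if $\phi=\phi_0$ at $\F$ itself, or at least on a set whose hull contains $\F$. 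The bookkeeping here — extracting from $Q\phi(\F)=\phi_0(\F)$ that $\phi$ must touch $\phi_0$ on the relevant face, using quasiaffinity of $\phi_\lambda$ to pass expectations through, and using strict quasiconvexity plus boundedness to localize the Young measures — is the technical heart; the rest is the routine ``if'' computation already rehearsed in Propositions~\ref{general} and~\ref{curiosa}.
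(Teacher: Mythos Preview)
Your ``if'' direction is fine and is essentially the paper's argument. The gap is in the ``only if'' direction. You propose to take $\bM_\lambda=\{\phi_0=\phi_\lambda\}$ and then prove that $\phi=\phi_0$ on the \emph{entire} face $\{\phi_0=\phi_\lambda\}$. That pointwise claim is simply false: the theorem is designed precisely to allow $\phi>\phi_0$ on most of each face. Concretely, in Corollary~\ref{ejemplo} one has $\phi(\F)=|\F^{(1)}|\,|\F^{(2)}|$ with $Q\phi=|\det\F|$, and $\phi>|\det\F|$ on almost all of the face $\{\det\F\ge0\}$. Your closing hedge (``or at least on a set whose hull contains $\F$'') is the right instinct, but it is not an argument, and your attempt to use strict quasiconvexity of $\psi$ to localize Young measures on the quasiaffine faces is misplaced: strict quasiconvexity is only what forces $\phi=\phi_0$ on $\bM_0=\{\phi_0=\psi\}$ (the optimal Young measure there must be a Dirac), not on the $\phi_\lambda$-faces.

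The paper runs the converse the other way round. Set $\Z=\{\phi=\phi_0\}$ and define $\bM_\lambda=\Z\cap\{\phi_0=\phi_\lambda\}$, so that $\phi=\phi_0$ on $\bM_\lambda$ is automatic and the real work is to show $Q\bM_\lambda=\{\phi_0=\phi_\lambda\}$. One inclusion is immediate from quasiaffinity of $\phi_\lambda$. For the other, take $\F$ with $\phi_0(\F)=\phi_\lambda(\F)$ and a gradient Young measure $\nu$ with barycenter $\F$ realizing $Q\phi(\F)$; then
\[
\langle\nu,\phi\rangle=Q\phi(\F)=\phi_0(\F)=\phi_\lambda(\F)=\langle\nu,\phi_\lambda\rangle,
\]
and since $\phi\ge\phi_0\ge\phi_\lambda$ pointwise with $\langle\nu,\phi-\phi_\lambda\rangle=0$, the support of $\nu$ lies in $\{\phi=\phi_\lambda\}=\Z\cap\{\phi_0=\phi_\lambda\}=\bM_\lambda$, so $\F\in Q\bM_\lambda$. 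This support-localization step is the idea your proposal is missing.
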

\begin{proof}
The proof follows along the lines of the preceding discussion. Note that
$$
\psi\le\phi_0\le\phi,\quad \phi_\lambda\le\phi_0\le\phi,
$$
and
$$
\bM_0=\{\phi=\phi_0=\psi\}.
$$
Because $\psi$, $\phi_0$, and $\phi_\lambda$ all are quasiconvex, we always have
$$
\psi\le\phi_0\le Q\phi, \quad \phi_\lambda\le\phi_0\le Q\phi.
$$
If there are sets $\bM_0$, $\bM_\lambda$ with the indicated properties, then for a matrix $\F\in \bM_0$, we would have
$$
\phi(\F)=\phi_0(\F)=\psi(\F)\le Q\phi(\F)\le\phi(\F),
$$
and so $Q\phi(\F)=\phi_0(\F)$. If, on the other hand, $\F\in Q\bM_\lambda$ and so there is some (homogeneous) gradient Young measure $\nu$ with
$$
\F=\langle\nu, \id\rangle,\quad \supp(\nu)\subset\bM_\lambda\subset\{\phi=\phi_\lambda\},
$$
then
$$
\phi_0(\F)\le Q\phi(\F)\le\langle\nu, \phi\rangle=\langle\nu, \phi_\lambda\rangle.
$$
But since $\phi_\lambda$ is quasiaffine,
$$
\langle\nu, \phi_\lambda\rangle=\phi_\lambda(\F)=\phi_0(\F)
$$
because of \eqref{clave}. Hence $Q\phi(\F)=\phi_0(\F)$ as well.

Conversely, suppose there is a function $\phi$ with $\phi_0=Q\phi$. The strict quasiconvexity assumed on $\psi$ implies that $\phi=\psi$ whenever $\psi=\phi_0$, and hence the coincidence set
$$
\Z=\{\phi_0=\phi\}
$$
is non-empty. Put
$$
\bM_0=\Z\cap\{\phi_0=\psi\},\quad \bM_\lambda=\Z\cap\{\phi_0=\phi_\lambda\}.
$$
Clearly $\bM_\lambda\subset\{\phi_0=\phi_\lambda\}$. Since $\phi_\lambda$ is quasiaffine, if $\F\in Q\bM_\lambda$, 
$$
 \phi_\lambda(\F)=\langle\nu, \phi_\lambda\rangle=\langle\nu, \phi_0\rangle
 $$
 for some gradient Young measure $\nu$ supported in $\bM_\lambda$ where $\phi_0=\phi_\lambda$. If $\phi_0=Q\phi$ a quasiconvex function, then
 $$
\phi_\lambda(\F)\le  \phi_0(\F)\le \langle\nu, \phi_0\rangle.
 $$
 Altogether we see that $\phi_0(\F)=\phi_\lambda(\F)$, and 
 $$
 Q\bM_\lambda\subset\{\phi_0=\phi_\lambda\}.
 $$
If, on the other hand, $\F$ is such that 
$$
Q\phi(\F)=\phi_0(\F)=\phi_\lambda(\F),
$$ 
then there is a gradient Young measure $\nu$ with support in the coincidence set $\Z$ and barycenter $\F$ such that, because of the quasiaffinity of $\phi_\lambda$,
$$
\langle\nu, \phi_\lambda\rangle=\phi_\lambda(\F)=Q\phi(\F)=\langle\nu, \phi\rangle.
$$
On the one hand $\phi-\phi_\lambda\ge0$, but on the other its integral against the probability measure $\nu$ vanishes. We can therefore conclude that 
$$
\supp(\nu)\subset\Z\cap\{\phi=\phi_\lambda\},
$$
i. e. $\F\in Q\bM_\lambda$. The other statements are straightforward if we take into account, once again, that $\phi=Q\phi=\phi_0$ in $\Z$ and $\phi>\phi_0$ off $\Z$. 
\end{proof}

As we see from this theorem, every quasiconvex function $\phi_0$ of the form \eqref{inverso} is always a quasiconvexification. Having interesting examples of integrands $\phi$ having such quasiconvexification $Q\phi=\phi_0$ depends on our ability to find generating sets $\bM_\lambda$. 

\section{Some examples}
We treat in this section examples of the form
\begin{equation}\label{tres}
\phi(\F)=|\F^{(1)}\times\F^{(2)}|\,|\F^{(3)}|,\quad \F=\begin{pmatrix}\F^{(1)}\\
\F^{(2)}\\\F^{(3)}\end{pmatrix}\in\Mt,
\end{equation}
where $\bu\times\bv$ is the vector product in $\R^3$, for which we can find its quasiconvexification. As a matter of fact, it is as cheap to treat the general $N$-dimensional situation. We would like to address the question of finding as many functions 
$$
\phi(\F):\MN\to\R
$$
as possible so that $Q\phi=\phi_0$ with $\phi_0(\F)=|\det\F|$. 
We can find initially at least $2N$ such different integrands all having the same quasiconvexification $\phi_0$. 
\begin{theorem}\label{adjunto}
Let
$$
\phi(\F):\MN\to\R,\quad \phi(\F)=|\adj^{(j)}\F|\,|\F^{(j)}|,
$$
where $\adj^{(j)}\F$ is the $N$-vector corresponding to the $j$-th column or row of the adjugate matrix of $\F$, and $\F^{(j)}$ is the $j$-th column- or row of $\F$, respectively, for some $j\in\{1, 2, \dots, N\}$. 
Then
$$
Q\phi(\F)=|\det\F|,\quad \F\in\MN.
$$
\end{theorem}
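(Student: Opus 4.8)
The plan is to reduce the statement to Theorem \ref{jacobiano} by a one-line pointwise computation. The engine is the Laplace (cofactor) expansion of the determinant along the $j$-th row, or column, which may be written as the identity $\det\F=\F^{(j)}\cdot\adj^{(j)}\F$ on $\MN$. Inserting it into the Cauchy--Schwarz inequality gives, for every $\F\in\MN$,
$$
|\det\F|=\bigl|\F^{(j)}\cdot\adj^{(j)}\F\bigr|\le|\F^{(j)}|\,|\adj^{(j)}\F|=\phi(\F),
$$
so $\phi(\F)\ge|\det\F|$ for all $\F$, and equality holds here exactly when $\F^{(j)}$ and $\adj^{(j)}\F$ are linearly dependent. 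I would first observe that, apart from the harmless case $\F^{(j)}=\bcero$ (where both sides above vanish and $\det\F=0$ too), this dependence is precisely the relation $\adj^{(j)}\F=\alpha\F^{(j)}$ defining $\bM_j$; and that, conversely, on $\bM_j$ one has $\det\F=\alpha|\F^{(j)}|^{2}$, hence $\phi(\F)=|\alpha|\,|\F^{(j)}|^{2}=\bigl|\alpha|\F^{(j)}|^{2}\bigr|=|\det\F|$. Thus $\phi(\F)=|\adj^{(j)}\F|\,|\F^{(j)}|$ satisfies the two hypotheses of Theorem \ref{jacobiano} --- it equals $|\det\F|$ on $\bM_j$ and dominates it off $\bM_j$ --- and $Q\phi(\F)=|\det\F|$ follows immediately.

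Granting Theorem \ref{jacobiano}, this is all there is to it. If one wants a self-contained argument, the ingredient that Theorem \ref{jacobiano} itself rests on is the set identity $Q\bM_j^{\pm}=\{\F\in\MN:\pm\det\F\ge0\}$, where $\bM_j^{\pm}$ is the part of $\bM_j$ on which $\pm\alpha\ge0$, equivalently on which $\pm\det\F\ge0$ since $\det\F=\alpha|\F^{(j)}|^{2}$ there. Because the determinant is quasiaffine and agrees with $\pm\det\F$ on each of the two sets $\{\pm\det\F\ge0\}$, the function $|\det\F|$ is quasiaffine on each of them, so Proposition \ref{curiosa} applied with $\Z_{\pm}=\bM_j^{\pm}$ delivers $Q\phi=|\det\F|$ for every admissible $\phi$, and in particular for the one above. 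To get the nontrivial inclusion $Q\bM_j^{+}\supset\{\F:\det\F>0\}$ I would mimic the explicit lamination in the proof of Theorem \ref{primeroo}: freeze the rows of $\F$ other than the $j$-th (which freezes the cofactor vector $\bw:=\adj^{(j)}\F$), split $\F^{(j)}=\F^{(j)}_{\parallel}+\F^{(j)}_{\perp}$ with $\F^{(j)}_{\parallel}\in\mathrm{span}(\bw)$, dispose of $\F^{(j)}_{\parallel}$ by a rank-one segment carried by the $j$-th row exactly as in the planar case --- this produces a scalar quadratic equation in the lamination parameter of the same shape as \eqref{cuadratico} --- and absorb $\F^{(j)}_{\perp}$ by a preliminary lamination in the rows different from $j$ that steers $\bw$ into the direction of $\F^{(j)}$ while keeping $\det\F>0$ throughout. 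The case $\det\F<0$ is symmetric after a global sign change, as in Theorem \ref{primeroo}, and $\{\det\F=0\}$ is recovered by a limiting argument.

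The main obstacle is precisely this transverse step. In dimension $2$ the cofactor vector is rigidly $\bR\F^{(1)}$ and a single rank-one segment closes the construction; for $N\ge3$ the assignment $(\text{rows}\ne j)\mapsto\adj^{(j)}\F$ is genuinely nonlinear, so steering $\bw$ toward $\F^{(j)}$ cannot in general be carried out affinely and will require an iterated, second-order laminate, the delicate point being to keep $\det\F$ of one sign along the whole path. I would handle it by laminating inside the planar slices spanned by two of the frozen rows, on which the relevant minors depend affinely, reducing each elementary move to the $2\times2$ computation already done, and then iterating over the remaining $N-1$ rows. Once $Q\bM_j^{\pm}=\{\F:\pm\det\F\ge0\}$ is available, Theorem \ref{adjunto} is exactly the instance of Theorem \ref{jacobiano} verified in the first paragraph.
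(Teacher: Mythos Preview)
Your opening reduction is correct but circular in this paper: Theorem \ref{jacobiano} is not proved independently; its proof is assembled ``little by little'', and the core step---the set identity $Q\bM_j^{\pm}=\{\pm\det\F\ge0\}$---is established precisely inside the proof of Theorem \ref{adjunto}. So the real content of your proposal is the self-contained part, and there the argument has a genuine gap. Your plan is to freeze the rows other than the $j$-th (thus fixing $\bw=\adj^{(j)}\F$) and then run a ``transverse'' lamination in those rows that steers $\bw$ into the direction of $\F^{(j)}$, followed by a ``parallel'' step in row $j$. You correctly identify the transverse step as the obstacle, since $(\hbox{rows}\ne j)\mapsto\adj^{(j)}\F$ is nonlinear; but your remedy---laminating in planar slices and ``iterating over the remaining $N-1$ rows'' while keeping the sign of $\det\F$---is only a hope, not a construction. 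Nothing in the sketch guarantees that the endpoints of each elementary move lie in $\bM_j^{+}$, nor that the procedure terminates.

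The paper avoids this obstacle by a different mechanism: it does \emph{not} freeze the other rows. The key observation is that $\adj^{(j)}$ is rank-one \emph{affine} even though it is globally nonlinear. Hence if $\F=t\F_1+(1-t)\F_0$ with $\F_1-\F_0$ rank-one and $\F_i\in\bM_j^{+}$, so $\adj^{(j)}\F_i=\alpha_i\F_i^{(j)}$, then both
$$
\F^{(j)}=t\F_1^{(j)}+(1-t)\F_0^{(j)},\qquad \adj^{(j)}\F=t\alpha_1\F_1^{(j)}+(1-t)\alpha_0\F_0^{(j)}
$$
hold simultaneously. This linear system is inverted for $\F_0^{(j)},\F_1^{(j)}$ in terms of $\F,t,\alpha_1,\alpha_0$; substituting into the rank-one constraint yields a single polynomial condition $P_{t,\alpha_1,\alpha_0}(\F)=0$ of degree $2N-2$, polyconvex, with nonnegative leading part $(t\alpha_0+(1-t)\alpha_1)|\adj^{(j)}\F|^2$. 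Lemma \ref{primero} then gives $R\{P=0\}\supset\{P\le0\}$, and an explicit choice (for instance $t=1/2$ with $\alpha_1,\alpha_0$ solving an elementary quadratic) shows every $\F$ with $\det\F>0$ lands in $\{P\le0\}$ for some triple. That is what replaces your unfinished iterated laminate.
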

\begin{proof}
The case $N=2$ has been treated in Corollary \ref{ejemplo}. We assume hence $N\ge3$.
It is clear that it suffices to treat one of those $2N$ possible cases. For definiteness, put
$$
\phi(\F):\MN\to\R,\quad \phi(\F)=|\adj^{(N)}\F|\,|\F^{(N)}|,
$$
where $\adj^{(N)}\F$ is the $N$-th, row-wise adjugate, $N$-vector of matrix $\F$, and $\F^{(N)}$ is the $N$-th row of $\F$.

It is elementary to realize that
$$
\phi_0(\F)=|\det\F|=\max\{\det\F, -\det\F\}
$$
with both $\pm\det\F$ quasiaffine, is of the form \eqref{inverso} (with no $\psi$). According to Theorem \ref{principal}, we need to identify two sets of matrices
$$
\bM_+\subset\{\F: \det\F>0\},\quad \bM_-\subset\{\F: \det\F<0\}
$$
such that
$$
Q\bM_+=\{\F: \det\F\ge0\},\quad Q\bM_-=\{\F: \det\F\le0\},
$$
and check that
$$
\phi=\phi_0\hbox{ in }\bM_+\cup\bM_-,\quad 
\phi\ge\phi_0\hbox{ off }\bM_+\cup\bM_-.
$$
We therefore examined first the set
$$
\bM_+=\{\phi(\F)=\det\F\}.
$$
It is straightforward to find, given that 
$$
\det\F=\adj^{(N)}\F\cdot\F^{(N)}
$$
(the same is true for all $2N$ possible cases), that 
$$
\bM_+=\{\F\in\MN: \alpha\F^{(N)}=\adj^{(N)}\F, \alpha>0\}.
$$
We can conclude through Theorem \ref{principal} as soon as we can prove that
$$
Q\bM_+=\{\F: \det\F\ge0\},
$$
since arguments for the negative part are symmetric. 

Assume a matrix $\F$ is such that
$$
\F=t\F_1+(1-t)\F_0,\quad \F_1-\F_0,\hbox{ rank-one}, \F_i\in\bM_+, i=1, 0, t\in[0, 1].
$$
Because all adjugate functions are rank-one affine, we know
$$
\adj^{(N)}\F=t\,\adj^{(N)}\F_1+(1-t)\,\adj^{(N)}\F_0,
$$
in addition to
$$
\F^{(N)}=t\F_1^{(N)}+(1-t)\F_0^{(N)}.
$$
Since each $\F_i\in\bM_+$, $i=1, 0$, we have altogether
$$
\adj^{(N)}\F=t{\alpha_1}\F_1^{(N)}+{(1-t)}{\alpha_0}\F_0^{(N)},\quad \F^{(N)}=t\F_1^{(N)}+(1-t)\F_0^{(N)}.
$$
Let us put, for the sake of notational simplicity $\bx_i=\F^{(N)}_i$, $i=1, 0$, so that
\begin{equation}\label{linsis}
\adj^{(N)}\F= t{\alpha_1}\bx_1+{(1-t)}{\alpha_0}\bx_0,\quad \F^{(N)}=t\bx_1+(1-t)\bx_0.
\end{equation}
We can solve for vectors $\bx_i$ in this system to find
\begin{gather}
\bx_0=\frac1{(1-t)(\alpha_1-\alpha_0)}(\alpha_1\F^{(N)}-\adj^{(N)}\F),\nonumber\\
\bx_1=\frac1{t(\alpha_1-\alpha_0)}(\adj^{(N)}\F-\alpha_0\F^{(N)}).\nonumber
\end{gather}
Since $\F_1-\F_0$ is rank-one, in particular, its determinant vanishes, and bearing in mind that $\F_i\in\bM_+$ and $\bx_i=\F^{(N)}_i$, we need to enforce
$$
0=(\alpha_1\bx_1-\alpha_0\bx_0)\cdot(\bx_1-\bx_0).
$$
If we substitute the formulas for $\bx_i$ in terms of $\F$, $t$ and $\alpha_i$, we conclude 
\begin{equation}\label{conjcero}
0=(\adj^{(N)}\F-(t\alpha_1+(1-t)\alpha_0)\F^{(N)})\cdot((t\alpha_0+(1-t)\alpha_1)\adj^{(N)}\F-\alpha_1\alpha_0\F^{(N)}).
\end{equation}
Regard $t$, $\alpha_1$, and $\alpha_0$ as fixed, and consider the polynomial $P(\F)\equiv P_{t, \alpha_1, \alpha_0}(\F)$ of degree $2N-2$ in $\F$ given by
$$
P_{t, \alpha_1, \alpha_0}(\F)=(\adj^{(N)}\F-(t\alpha_1+(1-t)\alpha_0)\F^{(N)})\cdot((t\alpha_0+(1-t)\alpha_1)\adj^{(N)}\F-\alpha_1\alpha_0\F^{(N)}).
$$
Its leading part is, given that $N\ge3$, is 
$$
P_0(\F)\equiv P_{t, \alpha_1, \alpha_0, 0}(\F)=(t\alpha_0+(1-t)\alpha_1)|\adj^{(N)}\F|^2.
$$
\eqref{conjcero} implies that
\begin{equation}\label{inclusion}
\{\F: P_{t, \alpha_1, \alpha_0}(\F)=0\}\subset Q\bM_+
\end{equation}
for each such triplet $(t, \alpha_1, \alpha_0)$. In addition, 
two main points, that are elementary to check, are:
\begin{enumerate}
\item $P_0(\F)\ge0$ for all $\F$, and it is not identically zero on the rank-one cone;
\item $P(\F)$ is rank-one convex because written in the form
\begin{align}
P(\F)=&P_0(\F)-(\alpha_1\alpha_0+(t\alpha_1+(1-t)\alpha_0)(t\alpha_0+(1-t)\alpha_1)\det\F\nonumber\\
&+\alpha_1\alpha_0(t\alpha_1+(1-t)\alpha_0)|\F^{(N)}|^2,\nonumber
\end{align}
we see that  it is, in fact, polyconvex. 
\end{enumerate}
Lemma \ref{primero} in Appendix \ref{appuno} permits us to ensure, for each fixed triplet $(t, \alpha_1, \alpha_0)$, that the rank-one envelope of the set
$\{P(\F)=0\}$ in \eqref{conjcero} is the sub-level set $\{P(\F)\le0\}$. Therefore, 
if one can show that for given $\F$ with positive determinant, one can always find values of $t\in[0, 1]$, and positive $\alpha_i$, $i=1, 0$, so that $P(\F)\le0$, then our result will be proved. Indeed, if this is so we would have
\begin{equation}\label{primerainclusion}
\{\F: \det\F>0\}\subset\cup_{t\in[0, 1], \alpha_i>0}\{\F: P_{t, \alpha_1, \alpha_0}(\F)\le0\},
\end{equation}
and then
\begin{align}
\{\F: \det\F>0\}&\subset\cup_{t\in[0, 1], \alpha_i>0}\{\F: P_{t, \alpha_1, \alpha_0}(\F)\le0\}\nonumber\\
&=\cup_{t\in[0, 1], \alpha_i>0}R\{\F: P_{t, \alpha_1, \alpha_0}(\F)=0\}\nonumber\\
&\subset \cup_{t\in[0, 1], \alpha_i>0}Q\{\F: P_{t, \alpha_1, \alpha_0}(\F)=0\}\nonumber\\
&\subset Q\bM_+\nonumber\\
&\subset\{\F: \det\F\ge0\}.\nonumber
\end{align}
Note how we have used here \eqref{inclusion}, and the facts that $\det$ is quasiaffine, and the rank-one convex envelope $R$ of a set of matrices is always a subset of the quasiconvexification $Q$ of the same set. 

There are various ways of checking \eqref{primerainclusion} as we have a lot of freedom. Assume $\F$ is given with positive determinant, and take $t=1/2$. Then
\begin{equation}\label{expresion}
P(\F)=\frac{\alpha_1+\alpha_0}2\left(|\adj^{(N)}\F|^2-\frac{\alpha_1+\alpha_0}2\det\F+\alpha_1\alpha_0|\F^{(N)}|^2\right)-\alpha_1\alpha_0\det\F.
\end{equation}
Given the form of the expression within parenthesis in \eqref{expresion},
if we further demand that
\begin{equation}\label{raices}
\alpha_1+\alpha_0=4\frac{|\adj^{(N)}\F|^2}{\det\F},\quad \alpha_1\alpha_0=\frac{|\adj^{(N)}\F|^2}{|\F^{(N)}|^2}
\end{equation}
the term within parenthesis in \eqref{expresion} vanishes, and then 
$$
P(\F)=-\frac{|\adj^{(N)}\F|^2}{|\F^{(N)}|^2}\det\F<0.
$$
Note that if $\det\F$ is positive, $\F^{(N)}$ cannot vanish. The values of $\alpha_1$ and $\alpha_0$ in \eqref{raices} are the roots of the quadratic polynomial
$$
\alpha^2-4\frac{|\adj^{(N)}\F|^2}{\det\F}\alpha+\frac{|\adj^{(N)}\F|^2}{|\F^{(N)}|^2}=0.
$$
Again, since $\det\F=\adj^{(N)}\F\cdot\F^{(N)}$, it is elementary to check that this polynomial admits two positive real roots $\alpha_1$ and $\alpha_0$. 

This full discussion, and the corresponding symmetric argument for matrices with negative determinant, show that 
$$
Q\bM_+=\{\F: \det\F\ge0\},\quad Q\bM_-=\{\F: \det\F\le0\},
$$
and our result is proved.
\end{proof}
A direct corollary of Theorem \ref{principal}, right after Theorem \ref{adjunto}, allows to find more functions $\psi$ for which $Q\psi=\phi_0$, once we have one. 
\begin{corollary}\label{gen}
Let $\phi_0(\F)$ be given in \eqref{inverso}, and let 
$$
\phi(\F):\M\to\R
$$ 
be such that $\phi_0=Q\phi$. Put $\Z=\{\phi=\phi_0\}$. 
If a further function $\psi(\F):\M\to\R$ is such that 
$$
\psi\ge\phi, \quad\Z=\{\psi=\phi_0\},
$$
then $Q\psi=Q\phi=\phi_0$. 
\end{corollary}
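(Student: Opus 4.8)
The plan is to read the statement off the characterization in Theorem~\ref{principal}, exploiting the fact that the generating sets which that theorem attaches to a function depend only on the function's coincidence set, and that $\phi$ and $\psi$ are assumed to share the same coincidence set $\Z=\{\phi=\phi_0\}=\{\psi=\phi_0\}$. Throughout I would write $\psi_\ast$ for the strictly quasiconvex summand appearing in the representation \eqref{inverso} of $\phi_0$ (this must be kept notationally distinct from the function $\psi$ of the present statement), and $\bM_0=\{\phi_0=\psi_\ast\}$.

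First I would dispatch the trivial half: since $\phi_0\le\phi\le\psi$ and $\phi_0$ is quasiconvex, $\phi_0$ is a quasiconvex minorant of $\psi$, hence $\phi_0\le Q\psi$ on all of $\M$. For the reverse inequality I would apply the ``only if'' part of Theorem~\ref{principal} to $\phi$, which is legitimate because $Q\phi=\phi_0$: it furnishes sets $\bM_\lambda\subset\{\phi_0=\phi_\lambda\}$ with $Q\bM_\lambda=\{\phi_0=\phi_\lambda\}$ for every $\lambda$, and with $\phi=\phi_0$ on $\bM_0\cup(\cup_\lambda\bM_\lambda)$, $\phi\ge\phi_0$ off it. Looking into the construction inside the proof of that theorem, these sets are taken to be $\bM_0=\Z\cap\{\phi_0=\psi_\ast\}$ and $\bM_\lambda=\Z\cap\{\phi_0=\phi_\lambda\}$; in particular $\bM_0\cup(\cup_\lambda\bM_\lambda)\subset\Z$.

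Then I would simply feed these same sets into the ``if'' part of Theorem~\ref{principal}, now for $\psi$. The inclusions $\bM_\lambda\subset\{\phi_0=\phi_\lambda\}$ and the identities \eqref{clave} are already available. Since $\Z=\{\psi=\phi_0\}$ by hypothesis and $\bM_0\cup(\cup_\lambda\bM_\lambda)\subset\Z$, we have $\psi=\phi_0$ on $\bM_0\cup(\cup_\lambda\bM_\lambda)$, while off that set $\psi\ge\phi\ge\phi_0$ holds because $\psi\ge\phi_0$ everywhere. Theorem~\ref{principal} then yields $Q\psi=\phi_0=Q\phi$, as desired. Equivalently, bypassing the characterization, one can argue pointwise exactly as in the proof of Theorem~\ref{principal}: for fixed $\F$, either $\phi_0(\F)=\psi_\ast(\F)$, so $\F\in\Z$ and $\phi_0(\F)\le Q\psi(\F)\le\psi(\F)=\phi_0(\F)$; or $\phi_0(\F)=\phi_\lambda(\F)$ for some $\lambda$, so $\F\in Q\bM_\lambda$ and, testing $Q\psi(\F)$ against a homogeneous gradient Young measure $\nu$ with barycenter $\F$ and $\supp\nu\subset\bM_\lambda\subset\Z$, one gets $\phi_0(\F)\le Q\psi(\F)\le\langle\nu,\psi\rangle=\langle\nu,\phi_0\rangle=\langle\nu,\phi_\lambda\rangle=\phi_\lambda(\F)=\phi_0(\F)$.

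There is no genuine obstacle here; the two points that need a little attention are the clash of the symbol $\psi$, and the observation — extracted from the proof of Theorem~\ref{principal} rather than from its statement — that the generating sets built from $\phi$ are contained in the common coincidence set $\Z$, which is precisely what lets the very same sets serve for $\psi$.
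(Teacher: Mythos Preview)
Your proposal is correct and follows essentially the same route as the paper: obtain $\phi_0\le Q\psi$ from quasiconvexity of $\phi_0$ and $\phi_0\le\phi\le\psi$, then invoke the ``only if'' direction of Theorem~\ref{principal} for $\phi$ to produce the sets $\bM_0,\bM_\lambda$ and feed those same sets back into the ``if'' direction for $\psi$. You are simply more explicit than the paper about the notational clash with the symbol $\psi$ and about the reason the same sets work for $\psi$ (namely that they lie inside the common coincidence set $\Z$), points the paper compresses into the phrase ``it is clear.''
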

\begin{proof}
The inequality $Q\psi\ge\phi_0$ is straightforward because 
$$
Q\phi=\phi_0\le\phi\le\psi
$$ 
and $\phi_0$, being a quasiconvex hull, is quasiconvex. On the other hand, Theorem \ref{principal} implies the existence of sets $\bM_\lambda$ and $\bM_0$ with the properties indicated in the statement of the theorem. It is clear, because of our hypotheses 
$$
\Z=\{\psi=\phi_0\},\quad \psi\ge\phi,
$$
that the same family of sets $\bM_\lambda$, $\bM_0$ enable the application of Theorem \ref{principal} for $\psi$ as well. Hence $Q\psi=\phi_0$.
\end{proof}

\section{Some extensions}\label{extensiones}
There are various ways to extend the previous examples. A first possibility is to consider
$$
\phi(\F)=|\F^{(1)}|\,|\F^{(2)}|\,|\F^{(3)}|,\quad \F=\begin{pmatrix}\F^{(1)}\\
\F^{(2)}\\\F^{(3)}\end{pmatrix}\in\Mt.
$$
Even though it is true that
$$
\phi(\F)\ge|\F^{(1)}\times\F^{(2)}|\,|\F^{(3)}|\ge\phi_0(\F),\quad \phi_0(\F)=|\det\F|,
$$
Corollary \ref{gen} cannot be used directly to conclude anything because the coincidence set $\{\phi=\phi_0\}$ is strictly smaller than 
$$
\{|\F^{(1)}\times\F^{(2)}|\,|\F^{(3)}|=\phi_0\},
$$ 
and further work is required to show that nevertheless we still have $Q\phi=\phi_0$. 

Other interesting extensions motivated by the use of these variational principles in inverse problems (\cite{faustinopedregal}) are the following
\begin{gather}
\psi_N(\F)=\sum_{i=1}^N\phi(\F_i)=\sum_{i=1}^N|\F^{(1)}_i|\,|\F^{(2)}_i|,\nonumber\\
\phi_N(\F)=\sqrt{\sum_{i=1}^N|\F^{(1)}_i|^2}\,\sqrt{\sum_{i=1}^N|\F^{(2)}_i|^2},,\nonumber\\
\F=\begin{pmatrix}\F_1&\F_2&\dots&\F_N\end{pmatrix}=
\begin{pmatrix}\F^{(1)}_1&\F^{(1)}_2&\dots&\F^{(1)}_N\\
\F^{(2)}_1&\F^{(2)}_2&\dots&\F^{(2)}_N\end{pmatrix}\in\MdN,\nonumber
\end{gather}
for a positive integer $N$. There are corresponding versions for $3\times3$ matrices. It is easy to argue that
$$
Q\psi_N(\F)=\sum_{i=1}^N|\det\F_i|,
$$
however, the identity
$$
Q\phi_N(\F)=\left|\sum_{i=1}^N\det\F_i\right|
$$
asks for more insight. 

The most interesting example in this section is however the following. For
$$
\F=\begin{pmatrix}\F_1&\F_2&\dots&\F_N\end{pmatrix}
=\begin{pmatrix}\F^{(1)}\\\F^{(2)}\end{pmatrix}=
\begin{pmatrix}F^{(1)}_1&F^{(1)}_2&\dots&F^{(1)}_N\\
F^{(2)}_1&F^{(2)}_2&\dots&F^{(2)}_N\end{pmatrix}\in\MtN,
$$
put
$$
\phi(\F)=|\F^{(1)}|\,|\F^{(2)}|.
$$
Depending on the particular value of $N$, we would like to select a collection $M_{ij}$, $(i, j)\in\Lambda$ of $2\times2$-minors of $\F$ such that $Q\phi(\F)=\phi_0(\F)$, where
$$
\phi_0(\F)=\sqrt{\sum_{(i, j)\in\Lambda}M_{ij}(\F)^2}\quad\hbox{or}\quad
\phi_0(\F)=\left|\sum_{(i, j)\in\Lambda}M_{ij}(\F)\right|.
$$
Note that $\phi_0(\F)$ is a polyconvex function in both situations.
The case $N=2$ has already been explored earlier. For this value of $N=2$, both forms of $\phi_0$ collapse to the same underlying function. 
We are here especially interested in the values $N=3$, and $N=2N$, an even number. In these two cases, we will take, respectively,
$$
\phi_0(\F)=|\F^{(1)}\times\F^{(2)}|=\sqrt{M_{12}(\F)^2+M_{13}(\F)^2+M_{23}(\F)^2},\quad \phi_0(\F)=\left|\sum_{i=1}^N\det\F_i\right|,
$$
where 
$$
\F=\begin{pmatrix}\F_1&\F_2&\dots&\F_N\end{pmatrix}\in\MdN,
$$
and each $\F_i$ is a $2\times2$-matrix. 
Note that we always have
$$
\Lambda\subset\{(i, j): 1\le i<j\le N\}.
$$
\begin{theorem}
If 
\begin{gather}
\phi_N(\F)=|\F^{(1)}|\,|\F^{(2)}|=\sqrt{\sum_{i=1}^N|\F^{(1)}_i|^2}\,\sqrt{\sum_{i=1}^N|\F^{(2)}_i|^2},\nonumber\\
\F=\begin{pmatrix}\F_1&\F_2&\dots&\F_N\end{pmatrix}=\begin{pmatrix}\F^{(1)}\\\F^{(2)}\end{pmatrix}=
\begin{pmatrix}\F^{(1)}_1&\F^{(1)}_2&\dots&\F^{(1)}_N\\
\F^{(2)}_1&\F^{(2)}_2&\dots&\F^{(2)}_N\end{pmatrix}\in\MdN,\nonumber
\end{gather}
we have
$$
Q\phi_N(\F)=\left|\sum_{i=1}^N\det\F_i\right|.
$$
\end{theorem}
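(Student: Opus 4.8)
The plan is to mirror the strategy used for Theorem \ref{adjunto}, recognizing $\phi_0=\left|\sum_i\det\F_i\right|$ as an inverse quasiconvexification of the product $\phi_N$. It is convenient to encode the block structure through the orthogonal $2N\times 2N$ matrix $\S=\mathrm{diag}(\bR,\dots,\bR)$ with $N$ diagonal blocks $\bR$, which satisfies $\S^2=-\mathbf{1}$. Writing $g(\F)=\sum_{i=1}^N\det\F_i$, a direct computation gives $g(\F)=-\F^{(1)}\cdot\S\F^{(2)}$, so $\phi_0=|g|$. Since $\S$ is orthogonal, Cauchy--Schwarz yields
\[ \phi_0(\F)=|\F^{(1)}\cdot\S\F^{(2)}|\le|\F^{(1)}|\,|\S\F^{(2)}|=|\F^{(1)}|\,|\F^{(2)}|=\phi_N(\F), \]
with equality exactly when $\F^{(1)}$ is parallel to $\S\F^{(2)}$. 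As $g$ is a linear combination of $2\times2$ minors, it is quasiaffine, so $\phi_0=\max\{g,-g\}$ is polyconvex; being below $\phi_N$ this already gives the easy inequality $\phi_0\le Q\phi_N$.

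For the reverse inequality I would invoke Theorem \ref{principal}. The function $\phi_0=\max\{g,-g\}$ is of the form \eqref{inverso} with no $\psi$ and with the two quasiaffine terms $\phi_+=g$, $\phi_-=-g$; assumption (1) holds since $\{g\ge0\}$ and $\{g\le0\}$ are nonempty, while (2)--(3) are vacuous in the absence of $\psi$. As generating sets I would take the two halves of the coincidence set, $\bM_\pm=\{\F:\F^{(2)}=\alpha\S\F^{(1)},\ \pm\alpha>0\}$, on which $\phi_N=\phi_0$ and $g=\alpha|\F^{(1)}|^2$ has the sign of $\alpha$; off $\bM_+\cup\bM_-$ one has $\phi_N\ge\phi_0$ by Cauchy--Schwarz. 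Since $g$ is quasiaffine and nonnegative on $\bM_+$, every gradient Young measure supported in $\bM_+$ has barycenter in $\{g\ge0\}$, so $Q\bM_+\subset\{g\ge0\}$ automatically. Thus everything reduces to the hull identity $Q\bM_+=\{g\ge0\}$, i.e. to the inclusion $\{g>0\}\subset Q\bM_+$ (the negative part being symmetric).

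To establish this inclusion I would follow the sub-level-set technique of Theorem \ref{adjunto}, the vector $-\S\F^{(2)}$ now playing the role that $\adj^{(N)}\F$ played there. Imposing that $\F$ arise from two matrices $\F_i=\begin{pmatrix}\bx_i\\\alpha_i\S\bx_i\end{pmatrix}\in\bM_+$, solving the resulting linear system for the $\bx_i$, and extracting the scalar condition $(\bx_1-\bx_0)\cdot(\alpha_1\bx_1-\alpha_0\bx_0)=0$ exactly as in \eqref{productonulo}, one is led to $P_{t,\alpha_1,\alpha_0}(\F)=0$ with
\[ P_{t,\alpha_1,\alpha_0}(\F)=\gamma|\F^{(2)}|^2+\alpha_1\alpha_0\beta|\F^{(1)}|^2-(\alpha_1\alpha_0+\beta\gamma)\,g(\F),\quad \beta=t\alpha_1+(1-t)\alpha_0,\ \gamma=t\alpha_0+(1-t)\alpha_1. \]
For $\alpha_i>0$, $t\in[0,1]$ this $P$ is the sum of a positive-semidefinite quadratic form in the entries of $\F$ and a multiple of the quasiaffine $g$, hence polyconvex, and Lemma \ref{primero} of Appendix \ref{appuno} identifies $\{P\le0\}$ with $R\{P=0\}$. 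Granting $\{P=0\}\subset Q\bM_+$, and taking $t=1/2$ with $\alpha_0\alpha_1=|\F^{(2)}|^2/|\F^{(1)}|^2$ and $\alpha_0+\alpha_1$ large (two positive roots of a quadratic as in \eqref{raices}), one checks $P(\F)<0$ whenever $g(\F)>0$, so $\{g>0\}\subset\bigcup_{t,\alpha_i}\{P\le0\}\subset Q\bM_+$.

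The main obstacle is precisely the inclusion $\{P=0\}\subset Q\bM_+$, because in $\MdN$ the rank-one connectedness of $\F_1-\F_0$ is the vector requirement that both rows be parallel, a codimension-$(2N-1)$ condition rather than the single scalar equation that sufficed for the distinguished row in Theorems \ref{primeroo} and \ref{adjunto}; consequently $\bM_+$ is reached only through laminates of order at least two, and the scalar $P=0$ is merely necessary. I would control this through the complex structure $\R^{2N}\cong\mathbb{C}^N$ in which $\S$ becomes multiplication by $i$: then $\bM_+$ consists of the complex-rank-one pairs, a rank-one connection corresponds to $\mathbf{q}_1-\mathbf{q}_0=\lambda(\mathbf{p}_1-\mathbf{p}_0)$ with $\lambda\in\R$, and $g=\mathrm{Im}\langle\F^{(1)},\F^{(2)}\rangle$. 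Since $\F^{(1)},\F^{(2)}$ always span a complex subspace of dimension at most two and laminations can be confined to that subspace, the whole statement reduces to the base case $N=2$ (the case $N=1$ being Corollary \ref{ejemplo}); it is there that the genuinely second-order laminate implicit in the sub-level-set argument must be shown to realize all of $\{g>0\}$, and this is the one step demanding real work.
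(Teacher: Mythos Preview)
Your strategy coincides with the paper's: introduce the block rotation $\S$ (the paper's $\bR$), write $g(\F)=-\F^{(1)}\cdot\S\F^{(2)}$, obtain the lower bound $\phi_0\le Q\phi_N$ from Cauchy--Schwarz and polyconvexity, identify the coincidence set $\Z=\{\F^{(2)}=\alpha\S\F^{(1)}\}$, and then reduce everything to the hull identity $Q\Z_+=\{g\ge0\}$ via the quadratic $P_{t,\alpha_0,\alpha_1}$ and the sub-level-set lemma from Appendix~\ref{appuno}. Up to this point your argument and the paper's are the same.

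Where you diverge is in the treatment of the inclusion $\{P=0\}\subset Q\bM_+$, and here you are \emph{more} careful than the paper. The paper simply asserts that ``calculations in the proof of Corollary~\ref{ejemplo} are formally the same'' and invokes Lemma~\ref{dosene} to get $Q\{P_2=0\}=\{P_2\le0\}$, then declares it sufficient to place every $\F$ with $g(\F)>0$ in some $\{P_2\le0\}$. But, as you correctly observe, in $\MdN$ the scalar condition \eqref{productonulo} no longer encodes rank-one connectedness of $\F_1-\F_0$; it only says $g(\F_1-\F_0)=0$. Concretely, with the explicit solutions $\bx_i\in\operatorname{span}\{\F^{(1)},\S\F^{(2)}\}$ one sees that the two rows of $\F_1-\F_0$ live in the transverse $2$-planes $\operatorname{span}\{\F^{(1)},\S\F^{(2)}\}$ and $\operatorname{span}\{\S\F^{(1)},\F^{(2)}\}$, so for generic $\F$ a first-order laminate inside $\Z_+$ is impossible. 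The paper does not address this; the chain analogous to the display following \eqref{primerainclusion} is written down without justifying its last inclusion in the $2\times2N$ setting.

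Your proposed remedy---pass to the complex picture $\R^{2N}\cong\mathbb C^N$, confine the construction to the $\S$-invariant real $4$-plane spanned by $\F^{(1)},\F^{(2)},\S\F^{(1)},\S\F^{(2)}$, and thereby reduce to the case $N=2$---is a sound idea and is the natural way to close the gap; but as you acknowledge, the second-order laminate that must then do the work in that base case is not actually exhibited in your write-up. So your proof, like the paper's, is complete modulo precisely this step; the difference is that you have identified the step explicitly and sketched a plausible route, whereas the paper suppresses it.
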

\begin{proof}
Let $\bR$ be, as ususal, the $\pi/2$-counterclockwise rotation in the plane. By a natural abuse of language, we will also put
$$
\bR:\R^{2N}\to\R^{2N},\quad \bR\bx=\bR(\bx_1, \bx_2, \dots, \bx_N)\mapsto
(\bR\bx_1, \bR\bx_2, \dots, \bR\bx_N),
$$
for $\bx_i\in\R^2$, $\bx=(\bx_1, \bx_2, \dots, \bx_N)\in\R^{2N}$. Note that $\bR^2=-\id$, minus the identity mapping, and
$$
-\F^{(1)}\cdot\bR\F^{(2)}=\sum_{i=1}^N\det\F_i.
$$
Formally, computations are similar to the ones in the proof of Corollary \ref{ejemplo}. Indeed, the coincidence set
$$
\Z=\{\phi_N=\phi_0\},\quad \phi_0(\F)=\left|\sum_{i=1}^N\det\F_i\right|
$$
can be written again i the form
$$
\Z=\left\{\begin{pmatrix}\bx\\\alpha \bR\bx\end{pmatrix}: \alpha\in\R, \bx\in\R^{2N}\right\}.
$$
We have a similar result to that in the proof of Corollary \ref{ejemplo} in the sense
$$
Q\Z_\pm=\{\F\in\MdN: \sum_{i=1}^N\det\F_i>(<)0\}.
$$
Calculations in the proof of of Corollary \ref{ejemplo} are formally the same, though the quadratic equation \eqref{cuadratico} becomes, after rearranging terms, 
\begin{align}
\alpha_1\alpha_0((1-t)\alpha_0+t\alpha_1)|\F^{(1)}|^2&+((1-t)\alpha_1+t\alpha_0)|\F^{(2)}|^2\nonumber\\
&+((\alpha_0-\alpha_1)^2t-2\alpha_0\alpha_1)\sum_{i=1}^N\det\F_i=0.\nonumber
\end{align}
Let $P_2(\F)$ be the second-degree polynomial in the entries of $\F$, for given $t\in[0, 1]$, $\alpha_1>0$, $\alpha_0>0$, on the left-hand side of this equation. It is immediate to check that Lemma \ref{dosene} below can be applied, and so we conclude that the quasiconvexification of the zero set $\{P_2=0\}$ is the sub-level set $\{P_2\le0\}$. As we argued earlier in the proof of Theorem \ref{adjunto}, it suffices to check that for arbitrary $\F\in\MdN$ with $\sum_i\det\F_i>0$, it is always possible to find $t\in[0, 1]$, and positive $\alpha_1$, $\alpha_0$ so that $P_2(\F)\le0$. This is similar to the parallel calculations in the proof of Corollary \ref{ejemplo}. 
\end{proof}
For the case of $\Mdt$ one has the following.
\begin{theorem}\label{caso23}
Put
$$
\phi(\F)=|\F^{(1)}|\,|\F^{(2)}|,\quad \F=\begin{pmatrix}\F^{(1)}\\\F^{(2)}\end{pmatrix}\in\Mdt, \F^{(i)}\in\R^3, i=1, 2.
$$
Then
$$
Q\phi(\F)=\phi_0(\F)=|\F^{(1)}\times\F^{(2)}|
$$
where $\times$ indicates vector product in $\R^3$.
\end{theorem}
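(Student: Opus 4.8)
The plan is to write $\phi_0(\F)=|\F^{(1)}\times\F^{(2)}|$ as a supremum of quasiaffine functions and to apply the ``if'' part of Theorem \ref{principal}, with a continuum of quasiaffine terms and no $\psi$, exactly in the spirit of the proof of Theorem \ref{adjunto}. For a unit vector $\be\in\R^3$ set $\phi_\be(\F)=\be\cdot(\F^{(1)}\times\F^{(2)})$; each $\phi_\be$ is a fixed linear combination of the three $2\times2$ minors of $\F$, hence quasiaffine, and $\phi_0=\sup_{|\be|=1}\phi_\be$ with the supremum attained (at $\be$ parallel to $\F^{(1)}\times\F^{(2)}$, or at any $\be$ when that vector is $\bcero$), so that $\Mdt=\cup_\be\{\phi_0=\phi_\be\}$. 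Lagrange's identity $|\F^{(1)}|^2|\F^{(2)}|^2=|\F^{(1)}\times\F^{(2)}|^2+(\F^{(1)}\cdot\F^{(2)})^2$ gives $\phi\ge\phi_0$ with coincidence set $\Z=\{\phi=\phi_0\}=\{\F:\F^{(1)}\cdot\F^{(2)}=0\}$. So it suffices to produce, for every unit $\be$, a set $\bM_\be\subset\{\phi_0=\phi_\be\}$ with $Q\bM_\be=\{\phi_0=\phi_\be\}$ and $\cup_\be\bM_\be=\Z$; then $\phi=\phi_0$ on $\cup_\be\bM_\be=\Z$, $\phi>\phi_0$ off $\Z$, and Theorem \ref{principal} yields $Q\phi=\phi_0$. (As in Theorem \ref{adjunto}, the continuum index set and the absence of $\psi$ cause no trouble, the argument being pointwise in $\F$; note too that Theorem \ref{principal} does not require the $\bM_\be$ to be pairwise disjoint.)

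I would take $\bM_\be=\{\phi_0=\phi_\be\}\cap\Z$, the matrices with orthogonal rows whose cross product is a non-negative multiple of $\be$. Every $\F\in\Z$ with both rows non-zero lies in $\bM_\be$ for $\be$ parallel to $\F^{(1)}\times\F^{(2)}\neq\bcero$, and every $\F\in\Z$ with a vanishing row lies in every $\bM_\be$, so $\cup_\be\bM_\be=\Z$. Since $\phi_\be\le\phi_0$ everywhere with $\phi_0-\phi_\be$ quasiconvex, $\{\phi_0=\phi_\be\}$ is a sublevel set of a quasiconvex function, hence quasiconvex, so $Q\bM_\be\subset\{\phi_0=\phi_\be\}$ automatically, and the content is the reverse inclusion. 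By the $SO(3)$-invariance of $\phi$ and $\phi_0$ under $\F\mapsto\F\bQ^{\mathrm T}$ --- which carries $\bM_\be$ to $\bM_{\bQ\be}$ --- it is enough to treat $\be=\be_3$. Then $\bM_{\be_3}$ is the union of the matrices with a zero row and the matrices whose rows lie in $\operatorname{span}(\be_1,\be_2)$, are orthogonal, and have positive minor $M_{12}$; correspondingly $\{\phi_0=\phi_{\be_3}\}=\{\F:\F^{(1)},\F^{(2)}\in\operatorname{span}(\be_1,\be_2),\,M_{12}(\F)\ge0\}\cup\{\F:\operatorname{rank}\F\le1\}$, noting that $\{\operatorname{rank}\F\le1\}=\{\phi_0=0\}$.

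For the first piece I would show that the four-dimensional subspace $W=\{\F:\F^{(1)},\F^{(2)}\in\operatorname{span}(\be_1,\be_2)\}$ is ``rank-one faithful'': if a rank-one matrix $\bc\otimes\mathbf n$ lies in $W$ then $\mathbf n\in\operatorname{span}(\be_1,\be_2)$, so every gradient Young measure generated by $\Md$-deformations, precomposed with the orthogonal projection $\R^3\to\operatorname{span}(\be_1,\be_2)$, becomes a gradient Young measure generated by $\Mdt$-deformations supported in $W$; the set of orthogonal pairs in $\operatorname{span}(\be_1,\be_2)$ with positive $M_{12}$ being an isometric copy of the set $\Z_+$ of Theorem \ref{primeroo}, that theorem gives $\{\F:\F^{(1)},\F^{(2)}\in\operatorname{span}(\be_1,\be_2),\,M_{12}(\F)\ge0\}\subset Q\bM_{\be_3}$. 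For the degenerate stratum I would check by an explicit rank-one lamination between $\{\F^{(1)}=\bcero\}$ and $\{\F^{(2)}=\bcero\}$ --- compatible exactly when the two non-trivial rows are parallel --- that $Q$ of the zero-row matrices is all of $\{\operatorname{rank}\F\le1\}$. Combining, $Q\bM_{\be_3}=\{\phi_0=\phi_{\be_3}\}$, hence $Q\bM_\be=\{\phi_0=\phi_\be\}$ for all $\be$ by rotation, which closes the argument. I expect the ``rank-one faithfulness'' step --- justifying the transfer of Theorem \ref{primeroo} from $\Md$ to $\Mdt$ by freezing the competing deformations in the $\be$-direction --- to be the only delicate point; everything else is bookkeeping around Theorem \ref{principal}.
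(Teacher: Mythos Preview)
Your argument is correct, but it follows a genuinely different route from the paper's own proof.

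The paper does not invoke Theorem~\ref{principal} or the supremum representation $\phi_0=\sup_{|\be|=1}\phi_\be$ at all. Instead it applies Proposition~\ref{general} directly: given an arbitrary $\F\in\Mdt$ with non-orthogonal, independent rows, it works entirely inside the single plane $\pi=\operatorname{span}(\F^{(1)},\F^{(2)})$ and proves a small planar lemma (two intersecting circles) producing vectors $\bx_\pm,\by_\pm\in\pi$ with $\bx_\pm\cdot\by_\pm=0$, $\F^{(1)}=\tfrac12(\bx_++\bx_-)$, $\F^{(2)}=\tfrac12(\by_++\by_-)$, and $\bx_+-\bx_-$ parallel to $\by_+-\by_-$. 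This gives a first-order laminate $\F=\tfrac12\F_1+\tfrac12\F_0$ with $\F_i\in\Z$, $\F_1-\F_0$ rank-one, and all four rows still in $\pi$; the last point makes $t\mapsto\phi_0(t\F_1+(1-t)\F_0)$ affine, so $\tilde\Z=\Mdt$ in the notation of Proposition~\ref{general} and the result follows. Thus the paper produces one explicit rank-one decomposition per matrix, whereas you organize matrices by the direction $\be$ of $\F^{(1)}\times\F^{(2)}$, reduce by rotation to $\be=\be_3$, and then import Theorem~\ref{primeroo} through the embedding $\Md\hookrightarrow W\subset\Mdt$.

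Your approach is more structural and reuses earlier machinery; the paper's is more elementary and self-contained. One remark on the step you flag as delicate: the ``rank-one faithfulness'' transfer is in fact painless here, because the proof of Theorem~\ref{primeroo} already establishes $\{\det\ge0\}=R\Z_+$ via \emph{first-order} laminates, and a rank-one $2\times2$ matrix $\bc\otimes\mathbf n$ embeds as the rank-one $2\times3$ matrix $\bc\otimes(\mathbf n,0)$, so the laminates pass to $W$ without any appeal to gradient Young measures or to lifting deformations. With that observation your sketch is complete.
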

\begin{proof}
It is elementary to have
$\phi(\F)\ge\phi_0(\F)$, and because $\phi_0$ is polyconvex, $Q\phi(\F)\ge\phi_0(\F)$. The coincidence set $\Z=\{\phi=\phi_0\}$ is given by
$$
\Z=\{\F\in\Mdt: \F^{(1)}\cdot\F^{(2)}=0\}.
$$
The following is an elementary fact. 
\begin{lemma}\label{basicoo}
Let $\bx, \by$ be two independent, non-orthogonal vectors in $\R^2$, and put
$$
\lambda=-\frac{\bx\cdot\by}{|\bx\cdot\by|}\in\{-1, 1\}.
$$
A non-vanishing vector $\bz\in\R^2$ can be found in such a way that if
$$
\bx_\pm=\bx\pm\bz,\quad \by_\pm=\by\pm\lambda\bz,
$$
then
\begin{enumerate}
\item orthogonality:
$$
\bx_+\cdot\by_+=\bx_-\cdot\by_-=0;
$$
\item parallelism: $\bx_+-\by_+$ is proportional to $\bx_--\by_-$ (and to $\bz$);
\item representation:
$$
\bx=\frac12\bx_++\frac12\bx_-,\quad \by=\frac12\by_++\frac12\by_-.
$$
\end{enumerate}
\end{lemma}
\begin{proof}
If vector $\bz$ is chosen in the intersection of the two circles
$$
(\bz-\bx)\cdot(\bz-\lambda\by)=0,\quad (\bz+\bx)\cdot(\bz+\lambda\by)=0,
$$
then it is elementary to check all the claimed conditions. The choice of $\lambda$ ensures, because the origen belongs to the interior of both circles, that they have a non-empty intersection. Once $\bz$ is chosen in this way, it is straightforward to check the three requirements in the statement. Note that $\lambda=1/\lambda$. 
\end{proof}
Suppose now, going back to the proof of  our theorem, that $\F\in\Mdt$ is an arbitrary matrix. If the rows $\F^{(1)}$ and $\F^{(2)}$ are orthogonal, $\F\in\Z$. If not, and 
assuming by density that the two rows of $\F$ are independent, it is always possible to work in a plane $\pi$ containing $\F^{(1)}$ and $\F^{(2)}$. If we apply Lemma \ref{basicoo} in the plane $\pi$ and to the two vectors
$$
\bx=\F^{(1)},\quad \by=\F^{(2)},
$$
we can find matrices $\F_1$ (with rows $\bx_+$ and $\bx_-$), $\F_0$ (with rows $\by_+$ and $\by_-$),  belonging to $\Z$ with the additional properties that $\F^{(j)}_i\in\pi$ for $j=1, 2$, $i=1, 0$, and such that $\F_1-\F_0$ is rank-one and 
$$
\F=\frac12\F_1+\frac12\F_0.
$$
Because all rows involved belong to the same plane $\pi$, it is also immediately checked that the function
$$
t\mapsto|(t\F^{(1)}_1+(1-t)\F^{(1)}_0)\times(t\F^{(2)}_1+(1-t)\F^{(2)}_0)|
$$
is affine in $t$ given that it never vanishes. Indeed, the two vectors
$$
t\F^{(1)}_1+(1-t)\F^{(1)}_0,\quad t\F^{(2)}_1+(1-t)\F^{(2)}_0
$$
can never be collinear if one relies on their form given through Lemma \ref{basicoo}. This is elementary. 

All of these facts imply, because of the arbitrariness of $\F$, that, with the notation of Proposition \ref{general}, the set $\tilde\Z$ is all of $\Mdt$. 
The conclusion is then a direct consequence of Proposition \ref{general}.
\end{proof}
If we put together this result with Theorem \ref{adjunto}, we are able to conclude
\begin{corollary}\label{tres}
Put 
$$
\phi(\F)=|\F^{(1)}|\,|\F^{(2)}|\,|\F^{(3)}|,\quad \F=\begin{pmatrix}\F^{(1)}\\\F^{(2)}\\\F^{(3)}\end{pmatrix}\in\Mt, \F^{(i)}\in\R^3, i=1, 2, 3.
$$
Then
$$
Q\phi(\F)=|\det\F|.
$$
\end{corollary}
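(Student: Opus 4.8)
The plan is to factor the triple product $\phi(\F)=|\F^{(1)}|\,|\F^{(2)}|\,|\F^{(3)}|$ through an intermediate integrand and apply the two results just obtained in succession. Observe that by the ordinary Cauchy--Schwarz/Hadamard estimate in $\R^3$ we have $|\F^{(1)}|\,|\F^{(2)}|\ge|\F^{(1)}\times\F^{(2)}|$, hence
$$
\phi(\F)=|\F^{(1)}|\,|\F^{(2)}|\,|\F^{(3)}|\ge|\F^{(1)}\times\F^{(2)}|\,|\F^{(3)}|=:\psi(\F)\ge|\det\F|=\phi_0(\F),
$$
the last inequality being again Hadamard, since $\det\F=(\F^{(1)}\times\F^{(2)})\cdot\F^{(3)}$. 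By Theorem \ref{adjunto} (with $j=3$, using $\adj^{(3)}\F=\F^{(1)}\times\F^{(2)}$ up to sign) we already know $Q\psi=\phi_0=|\det\F|$. So the only thing to establish is $Q\phi\le|\det\F|$; the reverse inequality $Q\phi\ge\phi_0$ is immediate because $\phi\ge\phi_0$ and $\phi_0$ is polyconvex, hence quasiconvex.

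First I would fix an arbitrary $\F\in\Mt$ and show that there is a gradient Young measure $\nu$ with barycenter $\F$, supported where $\phi=\phi_0$, and with $\langle\phi_0,\nu\rangle=\phi_0(\F)$; this gives $Q\phi(\F)\le\langle\phi,\nu\rangle=\langle\phi_0,\nu\rangle=\phi_0(\F)$ exactly as in the proof of Proposition \ref{general}. The coincidence set $\{\phi=\phi_0\}$ consists of matrices whose three rows are mutually orthogonal. The construction of $\nu$ proceeds by laminating in two stages. In the first stage I apply the mechanism behind Theorem \ref{caso23}: fixing the row $\F^{(3)}$, I split the pair $(\F^{(1)},\F^{(2)})$, using Lemma \ref{basicoo} inside a plane containing $\F^{(1)},\F^{(2)}$, into a rank-one segment of matrices whose first two rows are orthogonal to each other, while $t\mapsto|\F^{(1)}(t)\times\F^{(2)}(t)|$ stays affine. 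This replaces $\F$ by a first-order laminate supported on matrices in $\{\,\F^{(1)}\perp\F^{(2)}\,\}$ without changing $\langle|\F^{(1)}\times\F^{(2)}|\,|\F^{(3)}|\rangle$ — i.e. it reduces $\phi$ to $\psi$ in the averaged sense. In the second stage, on each such matrix $\G$ with $\G^{(1)}\perp\G^{(2)}$ I apply the lamination from Theorem \ref{adjunto}, which (since $\adj^{(3)}\G=\G^{(1)}\times\G^{(2)}$) produces a gradient Young measure with barycenter $\G$ supported in $\bM_3=\{\alpha\G^{(3)}=\adj^{(3)}\G\}$ — that is, where additionally $\G^{(3)}\parallel\G^{(1)}\times\G^{(2)}$, so all three rows are orthogonal and $\phi=\phi_0$ there — while keeping $\langle\det\cdot\rangle=\det\G$. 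Splicing (taking the composition of the two laminations as a single iterated laminate, hence a gradient Young measure) gives the desired $\nu$.

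The delicate point is that the first-stage splitting must be carried out so that it is compatible with the second stage: after laminating in $(\F^{(1)},\F^{(2)})$ I need the resulting matrices to still have all three rows, in particular the untouched third row, available for the second lamination, and I must check that the two rank-one directions used in the two stages are genuinely admissible (the increments live in different rows, so the rank-one constraints decouple, which is what makes the iteration legitimate). One must also handle the degenerate configurations — $\F^{(1)},\F^{(2)}$ already orthogonal, or collinear, or $\F^{(3)}=0$, or $\det\F=0$ — by a density argument, exactly as in the proof of Theorem \ref{caso23}; since $\phi$ and $\phi_0$ are continuous and $Q\phi$ is quasiconvex hence continuous, establishing $Q\phi=\phi_0$ on a dense set suffices. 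I expect the bookkeeping of the two-stage lamination — verifying that the composed object is a gradient Young measure with the correct barycenter and the two averaging identities $\langle|\F^{(1)}\times\F^{(2)}|\,|\F^{(3)}|\rangle$ affine and $\langle\det\rangle$ affine hold simultaneously — to be the main obstacle; the individual pieces are already supplied by Theorems \ref{caso23} and \ref{adjunto}. Once $\tilde\Z=\Mt$ is shown in the sense of Proposition \ref{general}, the conclusion $Q\phi(\F)=|\det\F|$ is immediate.
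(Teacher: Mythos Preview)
Your overall strategy coincides with the paper's: reduce $\phi$ to $\psi(\F)=|\F^{(1)}\times\F^{(2)}|\,|\F^{(3)}|$ via Theorem~\ref{caso23}, and then invoke Theorem~\ref{adjunto}. The difference is in the execution, and your version has a real gap in the second stage.

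Your stage~2 asserts that the lamination from Theorem~\ref{adjunto}, applied to a matrix $\G$ with $\G^{(1)}\perp\G^{(2)}$, lands in the set where \emph{all three} rows are mutually orthogonal. This is not so. Membership in $\bM_3$ only says $\H^{(3)}\parallel\H^{(1)}\times\H^{(2)}$ for the matrices $\H$ in the support; it says nothing about $\H^{(1)}\cdot\H^{(2)}$. The decomposition in Theorem~\ref{adjunto} is a genuine lamination in $\Mt$, not one confined to the third row, so it will in general destroy the orthogonality $\G^{(1)}\perp\G^{(2)}$ you manufactured in stage~1. Consequently your composite measure $\nu$ need not be supported in $\{\phi=\phi_0\}$, and the chain $\langle\phi,\nu\rangle=\langle\phi_0,\nu\rangle$ breaks.

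The paper avoids this by not composing the two laminations into a single Young measure at all. It simply observes that, because $\phi$ factors as a function of $(\F^{(1)},\F^{(2)})$ times a function of $\F^{(3)}$, one may quasiconvexify partially in the first two rows while treating $|\F^{(3)}|$ as an inhomogeneous coefficient; Theorem~\ref{caso23} then gives $Q\phi\le\psi$ pointwise. Since $Q\phi$ is itself quasiconvex and $\le\psi$, it follows that $Q\phi\le Q\psi=|\det\F|$ by Theorem~\ref{adjunto}, and the reverse inequality is trivial. Note that your own stage~1 already delivers $Q\phi(\F)\le\langle\phi,\mu\rangle=\psi(\F)$, so you were one line away: drop the explicit stage~2 and just use that $Q\phi$ is quasiconvex.
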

\begin{proof}
For the proof, notice that because there is no interaction between the two submatrices
$$
\begin{pmatrix}\F^{(1)}\\\F^{(2)}\end{pmatrix},\quad \F^{(3)}
$$
of $\F$ in $\phi$, we will have, because quasiconvexification works in the same way for inhomogeneous integrands, 
$$
Q\phi(\F)=Q\left(Q(|\F^{(1)}|\,|\F^{(2)}|)\,|\F^{(3)}|\right)=
Q(|\F^{(1)}\times\F^{(2)}|\,|\F^{(3)}|)=|\det\F|,
$$
by Theorems \ref{caso23} and \ref{adjunto}. 
\end{proof}

\section{Appendix. Auxiliary results}\label{appuno}
The results in this section, or slight variations of them, were proved in \cite{boussaidpedregal}, and even before in \cite{boussaid} and \cite{boussaid2}. 
\begin{lemma}\label{primero}
Let $P(\F):\MN\to\R$ be a polynomial of degree $2N-2$, $N\ge3$, with leading part $P_0(\F):\MN\to\R$ so that $P_0(\F)$ is homogeneous of degree $2N-2$. Suppose there is a rank-one matrix $\F_1$ such that 
$$
P_0(\F_1)>0,\quad P_0(-\F_1)>0.
$$
Then the rank-one convexification $R\Z_0$ of the zero set
$$
\Z_0=\{\F\in\MN: P(\F)=0\}
$$
contains the sub-level set
$$
\Z_-=\{\F\in\MN: P(\F)\le0\}.
$$
If, in addition, the polynomial $P(\F)$ is quasiconvex then $Q\Z_0=\Z_-$. Moreover, if there is another rank-one matrix $\F_2$ such that
$$
P_0(\F_2)<0,\quad P_0(\F_2)<0,
$$
then $Q\Z_0=\MN$.
\end{lemma}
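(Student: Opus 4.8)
The plan is to derive all three conclusions from one elementary observation: the restriction of $P$ to an affine line in a rank-one direction is a one–variable polynomial whose leading coefficient is $P_0$ evaluated at that direction, so the sign hypotheses on $P_0$ control its behaviour at $\pm\infty$, and the rank-one direction lets us read off lamination.

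\textbf{Step 1 ($\Z_-\subseteq R\Z_0$).} Take $\F$ with $P(\F)\le0$; if $P(\F)=0$ then $\F\in\Z_0$, so assume $P(\F)<0$. Consider $g(t)=P(\F+t\F_1)$. Since $P=P_0+(\text{lower-order terms})$, the coefficient of $t^{2N-2}$ in $g$ is exactly $P_0(\F_1)>0$, and because $2N-2$ is even we get $g(t)\to+\infty$ both as $t\to+\infty$ and as $t\to-\infty$. As $g(0)=P(\F)<0$, the intermediate value theorem provides $t_-<0<t_+$ with $g(t_\pm)=0$, i.e. $\F+t_\pm\F_1\in\Z_0$. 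Putting $\lambda=-t_-/(t_+-t_-)\in(0,1)$ we have $\F=\lambda(\F+t_+\F_1)+(1-\lambda)(\F+t_-\F_1)$, and the two endpoints differ by the rank-one matrix $(t_+-t_-)\F_1$; hence $\F\in R\Z_0$. The rank-one character of $\F_1$ is used only at this last point.

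\textbf{Step 2 (quasiconvex case).} If $P$ is quasiconvex, then $\Z_-\subseteq R\Z_0\subseteq Q\Z_0$ by Step 1 and the general inclusion $R\subseteq Q$. Conversely, if $\F\in Q\Z_0$ there is a homogeneous gradient Young measure $\nu$ with barycenter $\F$ and $\supp\nu\subseteq\Z_0$; Jensen's inequality for the quasiconvex function $P$ gives $P(\F)\le\langle\nu,P\rangle=0$, since $P\equiv0$ on $\Z_0$. Thus $Q\Z_0\subseteq\Z_-$, and equality holds.

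\textbf{Step 3 (case with $\F_2$), and where the care is needed.} If moreover $P_0(\F_2)<0$ for a rank-one $\F_2$, run the argument of Step 1 along the direction $\F_2$ for any $\F$ with $P(\F)>0$: then $h(t)=P(\F+t\F_2)$ has leading coefficient $P_0(\F_2)<0$, so $h(t)\to-\infty$ at $\pm\infty$ while $h(0)>0$, producing $s_-<0<s_+$ with $\F+s_\pm\F_2\in\Z_0$ and therefore $\F\in R\Z_0$ as before. Combined with Step 1, which already covers $\{P\le0\}$, this yields $R\Z_0=\MN$, hence $Q\Z_0=\MN$. There is no serious obstacle in any of this; the only points that deserve care are checking that $t\mapsto P(\F+t\F_i)$ genuinely has degree $2N-2$ with leading coefficient $P_0(\F_i)$ — this is exactly where the sign assumptions on $P_0$ do their work — and that the evenness of $2N-2$ makes the behaviour at the two ends of the line the same. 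Quasiconvexity of $P$ enters only through Jensen's inequality in Step 2.
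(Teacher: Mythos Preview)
Your proof is correct and follows essentially the same approach as the paper: restrict $P$ to an affine line in a rank-one direction, use the sign of $P_0$ on that direction to control the behaviour of the resulting one-variable polynomial at $\pm\infty$, and invoke the intermediate value theorem to land on $\Z_0$ on both sides. Your presentation is in fact slightly more streamlined than the paper's auxiliary lemma, since you exploit that $2N-2$ is even (so $P_0(-\F_1)=P_0(\F_1)$ automatically for a homogeneous polynomial of that degree), whereas the paper keeps the two sign hypotheses separate to cover the more general setting of positively homogeneous, not necessarily polynomial, leading parts.
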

There is a similar version for $\MtN$ matrices that we include here for the sake of completeness. This particular version is exactly the one that can be found in \cite{boussaidpedregal}.
\begin{lemma}\label{dosene}
Let $P(\F):\MtN\to\R$ be a polynomial of degree $2N-2$, $N\ge3$, with leading part $P_0(\F):\MtN\to\R$ so that $P_0(\F)$ is homogeneous of degree $2N-2$. Let $\land$ any cone in $\MtN$.
Suppose there is a matrix $\F_1\in\land$ such that 
$$
P_0(\F_1)>0,\quad P_0(-\F_1)>0.
$$
Then the $\land$-convexification $\land\Z_0$ of the zero set
$$
\Z_0=\{\F\in\MtN: P(\F)=0\}
$$
contains the sub-level set
$$
\Z_-=\{\F\in\MtN: P(\F)\le0\}.
$$
If, in addition, the polynomial $P(\F)$ is $\land$-convex then $\land\Z_0=\Z_-$. Moreover, if there is another matrix $\F_2\in\land$ such that
$$
P_0(\F_2)<0,\quad P_0(\F_2)<0,
$$
then $\land\Z_0=\MtN$.
\end{lemma}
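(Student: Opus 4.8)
The plan is the usual ``restrict to a line, apply the intermediate value theorem, split once'' argument, the same one behind Lemma \ref{primero}. \emph{Step 1: $\Z_-\subset\land\Z_0$.} Fix $\F$ with $P(\F)\le0$. If $P(\F)=0$ there is nothing to prove, so assume $P(\F)<0$ and consider the one-variable polynomial $q(s)=P(\F+s\F_1)$. Since $P_0$ is the homogeneous part of $P$ of degree $2N-2$ and the lower-order terms of $P$ contribute only powers $s^k$ with $k<2N-2$, the coefficient of $s^{2N-2}$ in $q$ is $P_0(\F_1)$; as $N\ge3$ the degree $2N-2$ is even, and the leading coefficient $P_0(\F_1)$ is positive, so $q(s)\to+\infty$ as $s\to\pm\infty$ (for an even-degree homogeneous $P_0$ the second hypothesis $P_0(-\F_1)>0$ says nothing new). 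Because $q(0)=P(\F)<0$, the intermediate value theorem yields $s_-<0<s_+$ with $q(s_\pm)=0$, i.e. $\F_\pm:=\F+s_\pm\F_1\in\Z_0$. With $t=-s_-/(s_+-s_-)\in(0,1)$ one has $\F=t\F_++(1-t)\F_-$ and $\F_+-\F_-=(s_+-s_-)\F_1$, a positive multiple of $\F_1$, hence an element of the cone $\land$. Therefore $\F$ is a convex combination of two matrices of $\Z_0$ with difference in $\land$, so $\F\in\land\Z_0$; letting $\F$ run over $\Z_-$ gives $\Z_-\subset\land\Z_0$.

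\emph{Step 2: equality under $\land$-convexity.} If $P$ is $\land$-convex, then $\Z_-=\{P\le0\}$ is itself a $\land$-convex set: for $A,B\in\Z_-$ with $A-B\in\land$ and $C=tA+(1-t)B$ one has $P(C)\le tP(A)+(1-t)P(B)\le0$. Since $\Z_0\subset\Z_-$ and $\land\Z_0$ is the smallest $\land$-convex set containing $\Z_0$, we conclude $\land\Z_0\subset\Z_-$; combined with Step 1, $\land\Z_0=\Z_-$.

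\emph{Step 3: the ``moreover'' part.} Assume in addition $\F_2\in\land$ with $P_0(\pm\F_2)<0$. Given an arbitrary $\F\in\MtN$, restrict $P$ to the line through $\F$ in direction $\F_2$: the polynomial $q(s)=P(\F+s\F_2)$ has leading coefficient $P_0(\F_2)<0$ in $s^{2N-2}$, hence $q(s)\to-\infty$ as $s\to\pm\infty$, so $\F+s_\pm\F_2\in\Z_-$ for suitable $s_-<0<s_+$. Exactly as in Step 1, $\F$ is a convex combination of these two points with difference a positive multiple of $\F_2\in\land$; since both points lie in $\Z_-\subset\land\Z_0$ and $\land\Z_0$ is $\land$-convex, $\F\in\land\Z_0$. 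As $\F$ was arbitrary, $\land\Z_0=\MtN$.

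The argument is short and hides no deep obstacle; the step that deserves a moment's care is the line restriction in Step 1 (and its repetition in Step 3): one must check that the top $s$-coefficient of $q$ is exactly $P_0(\F_1)$ (resp. $P_0(\F_2)$), exploit the parity of $2N-2$ to get blow-up to $+\infty$ (resp. $-\infty$) in \emph{both} directions along the line, and handle the degenerate case $P(\F)=0$ separately. Everything else is the single-lamination bookkeeping already carried out in the proofs of Corollary \ref{ejemplo} and Theorem \ref{adjunto}.
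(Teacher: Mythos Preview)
Your proof is correct and follows essentially the same route as the paper: restrict $P$ to a line in a cone direction, use the leading homogeneous part $P_0$ to control the behaviour at $\pm\infty$, and apply the intermediate value theorem to land on $\Z_0$ on both sides of $\F$. The paper packages this as a separate tool lemma (stated for a general sum of homogeneous pieces, which is why it keeps the redundant-in-this-case hypothesis $P_0(-\F_1)>0$) and parametrizes the two endpoints with two variables $t,\lambda$ instead of your single $s$, but the substance is identical.
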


The main tool in proving this kind of facts is the following lemma whose proof we briefly include here for the convenience of readers.
\begin{lemma}
Let $\land$ be any cone in a certain Euclidean space $\R^q$. Let $P(\bX)$ be a real function defined on $\R^q$ such that there are positive reals $d_1<d_2<\dots<d_n$ and homogeneous of degree $d_i$ functions $P_i(\bX)$ with
$$
P(\bX)=\sum_iP_i(\bX).
$$
Suppose that there exists $\bE\in\land$ such that
$$
P_n(\bE)>0, \quad P_n(-\bE)>0.
$$
If $\F\in\R^q$ is such that $P(\F)\le\alpha$, then there are two vectors $\bB, \C\in\R^q$, and $s\in[0, 1]$ such that
$$
\F=s\bB+(1-s)\C,\quad P(\bB)=P(\C)=\alpha,\quad \bB-\C\in\land.
$$
\end{lemma}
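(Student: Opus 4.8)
The plan is to slide $\F$ along the line through it in the direction $\bE$ and invoke the intermediate value theorem. First I would reduce to a one–variable problem: look for scalars $a,b\ge0$ such that, with $\bB=\F+a\bE$ and $\C=\F-b\bE$, one has $P(\bB)=P(\C)=\alpha$. Granting this, the rest is automatic. Taking $s=b/(a+b)$ when $a+b>0$ gives $\F=s\bB+(1-s)\C$ directly; moreover $\bB-\C=(a+b)\bE\in\land$ since $\land$ is a cone and $\bE\in\land$; and in the degenerate case $P(\F)=\alpha$ one simply takes $\bB=\C=\F$ (the cone $\land$ contains $\bcero$). So everything collapses to the following: given $g(t):=P(\F+t\bE)$, which satisfies $g(0)=P(\F)\le\alpha$, produce $a>0$ and $b>0$ with $g(a)=g(-b)=\alpha$, or else observe that $g(0)=\alpha$.

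The one substantive step is to show that $g(t)\to+\infty$ both as $t\to+\infty$ and as $t\to-\infty$; this is exactly where the hypotheses are used. For $t>0$, homogeneity gives $P_i(\F+t\bE)=t^{d_i}P_i(\bE+\F/t)$, so that $g(t)/t^{d_n}=\sum_{i}t^{\,d_i-d_n}P_i(\bE+\F/t)$; since each $P_i(\bE+\F/t)\to P_i(\bE)$ by continuity while $d_i-d_n<0$ for $i<n$, this tends to $P_n(\bE)>0$, and as $d_n>0$ we conclude $g(t)\to+\infty$. Carrying out the identical computation with $-\bE$ in place of $\bE$ (that is, letting $t\to-\infty$) gives $g(t)/|t|^{d_n}\to P_n(-\bE)>0$, so $g$ blows up on the other side too. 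In the applications the $P_i$ are polynomials, so $g$ is literally a polynomial in $t$ of degree $d_n$ with positive leading coefficient $P_n(\bE)$ — and $P_n(-\bE)$ read off from the opposite end — which makes this step immediate; the general continuous case is covered by the scaling argument just sketched.

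To finish: if $g(0)=\alpha$ we are in the degenerate case above; if $g(0)<\alpha$, then since $g$ is continuous and tends to $+\infty$ in both directions, the intermediate value theorem yields $a>0$ with $g(a)=\alpha$ on $(0,\infty)$ and $b>0$ with $g(-b)=\alpha$ on $(-\infty,0)$, and the reduction completes the proof. The main obstacle — really the only point that is not bookkeeping — is the asymptotic claim $g(\pm\infty)=+\infty$: it relies essentially on $P_n$ being the \emph{strictly} highest–degree homogeneous summand (so that no lower–order term can overpower it along this line) and on \emph{both} sign conditions $P_n(\bE)>0$, $P_n(-\bE)>0$ (so that $g$ grows rather than decays at each end). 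The convex–combination algebra, the membership $\bB-\C\in\land$, and the application of the intermediate value theorem are all elementary.
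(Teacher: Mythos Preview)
Your proof is correct and follows essentially the same route as the paper's: slide $\F$ along the line in direction $\bE$, use homogeneity of the top-degree part to force $g(t)=P(\F+t\bE)\to+\infty$ at both ends, and apply the intermediate value theorem. The paper parametrizes the two sides asymmetrically via $\bB(t)=\F+t\bE$ and $\C_t(\lambda)=\F-\frac{\lambda}{1-\lambda}\,t\bE$, while you parametrize symmetrically with $a,b\ge0$ and recover $s=b/(a+b)$; this is a cosmetic difference only.
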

\begin{proof}
Suppose  that $P(\F)\le\alpha$.
 Let 
$$
 \bB(t)=\F+t\bE,\quad \C_t(\lambda)=\F-{\lambda\over1-\lambda}t\bE
 $$ 
 for $\lambda\in[0, 1)$. Then for every $t\in \R$ and each $\lambda\in[0,1)$ we have
$$
\F=\lambda \bB(t)+(1-\lambda)\C_t(\lambda), \hbox{ and }
(\bB(t)-\C_t(\lambda))\in\land.
$$
Consider the function $t\mapsto P(\bB(t))$. For $t=0$,
$P(\bB(0))=P(\F)\le\alpha$. On the other hand, for $t$ large we make use of the homogeneity 
\begin{eqnarray*}
P(\bB(t))&=&P_1(\bB(t))+P_2(\bB(t))+\dots+P_n(\bB(t))\\&=&P_1(\bB+t\bE)+P_2(\bB+t\bE)+\dots+P_n(\bB+t\bE)\\&=&t^{d_1}P_1({1\over
t}\bB+\bE)+t^{d_2}P_2({1\over
t}\bB+\bE)+\dots+t^{d_n}P_n({1\over
t}\bB+\bE)\\&=&t^{d_n}\left[t^{(d_1-d_n)}P_1({1\over
t}\bB+\bE)+t^{(d_2-d_n)}P_2({1\over t}\bB+\bE)+\dots+P_n({1\over
t}\bB+\bE)\right].
\end{eqnarray*}
Then
$$
\lim_{t\to+\infty}P(\bB(t))=\lim_{t\to+\infty}t^{d_n}P_n(\bE)=+\infty.
$$
By  continuity,  there exists $t_0>0$ such that $P(\bB(t_0))=\alpha.$
For this value $t_0$, we focus on $\C_{t_0}(\lambda)$, and consider the
function $\lambda\in[0,1)\mapsto h(\C_{t_0}(\lambda)).$ For
$\lambda =0$, $P(\C_{t_0}(0))=P(\F)<\alpha$, and arguing as above we
have
$$
\lim_{\lambda\to1^-}P(\C_{t_0}(\lambda))=\lim_{\lambda\to1^-}t_0^{d_n}({\lambda\over1-\lambda})^{d_n}P_n(-\bE)=+\infty.
$$
By continuity again, there exists a real $\lambda_0\in]0,1[$ such that
$P(\C_{t_0}(\lambda_0))=\alpha.$
\end{proof}

\section{Appendix}\label{ultimo}
Most of the basic concepts involved in this contribution are well-known to specialists in the area of non-convex vector variational problems. We simply gather here various statements to facilitate the understanding of the scope of our results, and provide some standard references for interested readers. 

Young measures have turned out to be an accepted way to deal with weak convergence and non-linear integral functionals (\cite{Ball2}). When these families of probability measures are generated by sequences of gradients, they are called gradient Young measures (\cite{PedregalI}). It is important to stress this point, as it is of paramount importance to bear in mind the fact that having gradients of functions is always a requirement. Results are much easier to understand if we neglect this gradient condition, as we fall back to usual notions of convexity (\cite{DacorognaH}). Though it is also important to pay attention to spaces where these generating sequences of gradients belong to, we will simply consider sequences of gradients  of uniformly bounded Lipschitz functions. We can put $\G\bY(\M)$ for the full set of homogeneous (not depending on the point $\bx$ in the domain $\Omega\subset\R^N$ considered) gradient Young measures that can be generated by a sequence of gradients of uniformly bounded Lipschitz fields with $m$ components. 
\begin{itemize}
\item Let 
$$
\phi(\F):\M\to\R\cup\{+\infty\}
$$
be an integrand. The function
\begin{equation}\label{cuasiconvexificacion}
Q\phi(\F)=\inf\{\langle\phi, \nu\rangle: \nu\in\G\bY(\M), \langle\id, \nu\rangle=\F\}
\end{equation}
is called the quasiconvexification of $\phi$. If $Q\phi$ turns out to yield back $\phi$, we say that $\phi$ is quasiconvex. The remarkable fact that place these convex hulls in an important role is the coincidence of the two infima
$$
\inf\{\int_\Omega \phi(\nabla\bu(\by))\,d\by: \bu=\bu_0\hbox{ on }\partial\Omega\}
$$
and
$$
\inf\{\int_\Omega Q\phi(\nabla\bu(\by))\,d\by: \bu=\bu_0\hbox{ on }\partial\Omega\},
$$
under appropriate classes of competing fields $\bu$ that we do not bother to specify here. The result is valid even for inhomogeneous integrands $\phi(\by, \F)$. 
\item It is a fact that
$$
Q\phi=\sup\{\psi: \psi\le\phi, \phi, \hbox{quasiconvex}\},
$$
and that the quasiconvexification of a function is a quasiconvex function on its own. 
\item There is a special subclass of $\G\bY(\M)$, the so-called laminates $\L(\M)$ (\cite{PedregalI}), which, in fact, is the collection of those that are used in practice in computations. They follow a natural, recursive law that is quite helpful in many ways (\cite{DacorognaE}). 
\item The elements of $\G\bY(\M)$ realizing the infimum in \eqref{cuasiconvexificacion} enjoy special properties. The most important is the localization of its support: for one such $\nu$ we will have
$$
\supp(\nu)\subset\{\phi=Q\phi\}.
$$
\item This same quasiconvexification concept can also be applied to sets $\S\subset\M$ of matrices. Though there are several different but equivalent ways to define these convex hulls of sets, one possibility is to define
$$
Q\S=\{\langle\id, \nu\rangle: \nu\in\G\bY(\M), \supp(\nu)\subset\S\}.
$$
The same applies to the rank-one convexification of $\S$, namely
$$
R\S=\{\langle\id, \nu\rangle: \nu\in\L(\M), \supp(\nu)\subset\S\}.
$$
\item Quasiconvex functions that are not convex are not easy to find. The main such source is the class of polyconvex functions. They are built upon the so-called quasiaffine functions which are those  $\phi(\F)$ for which both $\phi$ and $-\phi$ are quasiconvex. These are known to be exactly the linear functions of the full set of minors (of any size) of $\F$. Polyconvex functions are then convex (in the usual sense) functions of all such minors. Finally, another important collection of functions is the class of rank-one convex functions which are those that are convex, at least, along rank-one convex directions. Quasiconvex functions are always rank-one convex. There is a deep parallelism between gradient Young measures and quasiconvex functions, on the one hand, and laminates and rank-one convex functions on the other. It is established through Jensen's inequality (\cite{kinder}). 
\end{itemize}

\end{document}